\documentclass[12pt, reqno]{amsart}
\usepackage{amsmath, amsthm, amscd, amsfonts, amssymb, graphicx, color,tikz}
\usepackage[bookmarksnumbered, colorlinks, plainpages]{hyperref}
\usepackage{mathrsfs}
\usepackage{enumerate}
\usepackage[utf8]{inputenc}
\usepackage{bbold}

\usepackage[all]{xy}  

\newtheorem{theorem}{Theorem}[section]
\newtheorem{lemma}[theorem]{Lemma}

\newtheorem{corollary}[theorem]{Corollary}
\theoremstyle{definition}
\newtheorem{definition}[theorem]{Definition}
\newtheorem{example}[theorem]{Example}

\newtheorem{remark}[theorem]{Remark}

\numberwithin{equation}{section}

\newcommand{\veps}{\varepsilon}
\newcommand{\CC}{\mathbb C}
\newcommand{\RR}{\mathbb R}
\newcommand{\DD}{\mathbb D}
\newcommand{\TT}{\mathbb T}
\newcommand{\ZZ}{\mathbb Z}
\newcommand{\NN}{\mathbb N}

\newcommand{\alg}{\mathrm{alg\,}}

\newcommand{\fcd}{\mathcal{F}({\mathbb{C}}^d)}
\newcommand{\vect}{\mathrm{span\,}}

\newcommand{\sccd}{\mathcal S_c(\CC^d)}

\allowdisplaybreaks[4]

\hypersetup{colorlinks=true,linkcolor=blue,citecolor=green}

\begin{document}
\setcounter{page}{1}

\title[Semigroup of composition operators]
{Semigroup of composition operators on the Fock space of several complex variables}
\date{\today}

\author[F. Bayart, M. Wang, X. Yao]{Fr\'ed\'eric Bayart, Maofa Wang and Xingxing Yao}

\address{Laboratoire de Math\'ematiques Blaise Pascal UMR 6620 CNRS, Universit\'e Clermont Auvergne, Campus universitaire des C\'ezeaux, 3 place Vasarely, 63178 Aubi\`ere Cedex, France.}
\email{frederic.bayart@uca.fr}

\address{School of Mathematics and Statistics, Wuhan University, Wuhan 430072, China.}
\email{mfwang.math@whu.edu.cn}

\address{School of Mathematics and Physics, Wuhan Institute of Technology, Wuhan 430205, China.}
\email{xxyao.math@wit.edu.cn}


\subjclass[2020]{Primary 47B33, Secondary 46E15, 47D03}

\keywords{Composition operator, Fock space, semigroups, infinitesimal generator, strong irreducibility}

\begin{abstract}
This paper considers discrete and continuous semigroups of composition operators on the Fock space of several complex variables. For discrete semigroups generated by a single operator, the asymptotic behaviour is analysed. Strongly continuous semigroups are studied, and in particular it is shown that the classical Berkson--Porta theorem fails in this context. As an application, we show that any bounded composition operator is strongly reducible on the Fock space and that it is mean-ergodic.
\end{abstract}
\maketitle


\section{Introduction}

Let $d\geq 1$. We endow $\mathbb{C}^{d}$ with the standard inner product $\langle z,w\rangle=\sum_{j=1}^{d}z_{j} \overline{w_{j}}$ and let us denote $|z|=\sqrt{\langle z,{z}\rangle}$.
The classical Fock space on $\mathbb{C}^{d}$ is the separable Hilbert space of holomorphic functions on $\CC^d$ defined by
\begin{equation*}
\fcd=\left\{f\in H(\mathbb{C}^{d}):  \|f\|^{2}:=\frac{1}{(2\pi)^{d}}\int_{\mathbb{C}^{d}}|f(z)|^{2}e^{-\frac{|z|^{2}}{2}}dA(z)<\infty\right\}.
\end{equation*}
For more details about Fock spaces, we refer to the monograph written by Zhu \cite{Zh}.

\smallskip

A holomorphic map $\varphi:\mathbb{C}^{d}\rightarrow\mathbb{C}^{d}$
induces a composition operator $C_\varphi$ defined by $C_{\varphi}f=f\circ\varphi$ for any  $f\in \fcd$.
Carswell et al. \cite{CMS} have characterized when $C_{\varphi}$ is a bounded or compact composition operator on $\fcd$, as follows.

\begin{theorem}\label{bound-com}
Let $\varphi:\mathbb{C}^{d}\rightarrow\mathbb{C}^{d}$ be a holomorphic map.
\begin{itemize}
  \item[(a)] $C_{\varphi}$ is bounded on $\fcd$ if and only if $\varphi(z)=Az+B$, where $A\in \mathcal M_d(\mathbb C)$, $\|A\|\leq1$, $B\in \mathbb{C}^{d}$,
and $\langle A\zeta,B\rangle=0$ for all $\zeta\in\mathbb{C}^{d}$ with $|A\zeta|=|\zeta|$.
  \item[(b)] $C_{\varphi}$ is compact on $\fcd$ if and only if $\varphi(z)=Az+B$, where $A\in \mathcal M_d(\mathbb C)$ and $\|A\|<1$, $B\in \mathbb{C}^{d}$.
\end{itemize}
\end{theorem}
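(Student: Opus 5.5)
The plan is to work throughout with the reproducing kernels $K_w(z)=e^{\langle z,w\rangle/2}$ of $\fcd$, for which $\|K_w\|^2=e^{|w|^2/2}$ and $C_\varphi^*K_w=K_{\varphi(w)}$. A bounded operator satisfies $\|C_\varphi^* K_w\|\le \|C_\varphi\|\,\|K_w\|$, so the key quantity is the ratio
\[
\frac{\|K_{\varphi(w)}\|^2}{\|K_w\|^2}=\exp\Bigl(\tfrac{|\varphi(w)|^2-|w|^2}{2}\Bigr).
\]

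\emph{Necessity in (a).} Boundedness forces $|\varphi(w)|^2-|w|^2\le C$ for all $w\in\CC^d$.
\begin{enumerate}[(i)]
\item Each coordinate $\varphi_j$ is entire with $|\varphi_j(w)|\le |w|+C'$. By Cauchy estimates (Liouville), $\varphi_j$ is affine, so $\varphi(z)=Az+B$.
\item Put $w=t\zeta$ with $t\to\infty$. Then $|tA\zeta+B|^2-t^2|\zeta|^2\le C$, and dividing by $t^2$ gives $|A\zeta|\le|\zeta|$, that is, $\|A\|\le 1$.
\item If $|A\zeta|=|\zeta|$, expand with $t\in\CC$:
\[
|tA\zeta+B|^2-|t\zeta|^2=2\,\mathrm{Re}\bigl(t\langle A\zeta,B\rangle\bigr)+|B|^2.
\]
This is bounded above in $t$ only if $\langle A\zeta,B\rangle=0$.
\end{enumerate}

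\emph{Sufficiency in (a).} This is the main obstacle, because kernel estimates alone do not give boundedness. I would reduce to simple forms.
\begin{enumerate}[(i)]
\item Unitary maps $z\mapsto Uz$ induce unitary composition operators, since the Gaussian measure is rotation invariant.
\item Translations are handled by the Weyl operators $W_af=k_a\,f(\cdot-a)$, where $k_a=K_a/\|K_a\|$; these are unitary on $\fcd$.
\item Use the singular value decomposition $A=U\Sigma V$ and compose with unitary maps. This lets me assume $A$ is diagonal with entries $\sigma_j\in[0,1]$.
\item Split $\CC^d=E\oplus E^\perp$, where $E$ is the span of the coordinates with $\sigma_j=1$, i.e.\ the set of $\zeta$ with $|A\zeta|=|\zeta|$ after the reduction. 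The compatibility condition says $B\in E^\perp$ (after transporting $B$ by the unitaries).
\item The Fock space factors as a tensor product $\mathcal F(E)\otimes\mathcal F(E^\perp)$, and $C_\varphi$ becomes $I\otimes C_\psi$ with $\psi(z')=A'z'+B'$ and $\|A'\|<1$.
\end{enumerate}
So it suffices to show that $C_\psi$ is bounded, indeed compact, when $\|A\|<1$ and $B$ is arbitrary. For this I would write $f=\sum_\alpha c_\alpha e_\alpha$ in the orthonormal monomial basis, with $\|z^\alpha\|^2=2^{|\alpha|}\alpha!$. Then $C_{Az}$ with $A$ diagonal acts by $z^\alpha\mapsto\sigma^\alpha z^\alpha$. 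This operator is a contraction, and it is compact when every $\sigma_j<1$. Adding the translation gives $C_{Az+B}=C_{Az}\circ C_{z+A^{-1}B}$ when $A$ is invertible. Translations are not bounded, so instead I would factor through $W$: one checks $C_{z+B}=M_{g}W$-type identities, and then absorb the Gaussian factor using the strict contraction. The cleanest route is to estimate directly
\[
\|f(Az+B)\|^2\le \int |f(u)|^2 e^{-|A^{-1}(u-B)|^2/2}\,|\det A|^{-2}\,dA(u),
\]
and note that $|A^{-1}(u-B)|^2\ge(1+\delta)|u|^2-C$ for some $\delta>0$ since $\|A\|<1$. Hence this integral is dominated by $C\|f\|^2$. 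Non-invertible $A$ is handled by perturbing $\sigma_j=0$ to small $\varepsilon$, or by a direct argument on the vanishing coordinates, where $f$ is restricted to a subspace.

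\emph{Part (b).} Compactness when $\|A\|<1$ follows from the same change of variables. The extra decay $e^{-\delta|u|^2/2}$ shows that $C_\varphi$ maps weakly null sequences to norm null ones, because bounded sets of $\fcd$ converge uniformly on compacts. Conversely, if $C_\varphi$ is compact, then $C_\varphi^*k_w\to0$ as $|w|\to\infty$, since the normalized kernels $k_w=K_w/\|K_w\|$ tend weakly to $0$. If some $\zeta$ had $|A\zeta|=|\zeta|$, then (a) gives $\langle A\zeta,B\rangle=0$, so along $w=t\zeta$ the ratio $\exp\bigl((|B|^2+0)/2\bigr)$ stays bounded below and does not tend to $0$, a contradiction. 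Hence $\|A\|<1$.
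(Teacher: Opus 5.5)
The paper does not prove Theorem \ref{bound-com}; it quotes it from \cite{CMS}, so there is no internal argument to compare with. Your proof is correct and self-contained. The following steps all work:
\begin{itemize}
\item the necessity part via $C_\varphi^*K_w=K_{\varphi(w)}$ (linear growth, then $\|A\|\le 1$, then the compatibility condition from the real-linear term);
\item the unitary reduction $C_{U\Sigma Vz+B}=C_V\circ C_{\Sigma z+U^*B}\circ C_U$;
\item the splitting $C_\varphi=I\otimes C_\psi$, which is the same Fubini slicing the paper uses in the proof of Theorem \ref{thm:iteration};
\item the change of variables for invertible $A$ with $\|A\|<1$, which is the device of Lemma \ref{lem:uniformbound};
\item the kernel argument for the converse in (b).
\end{itemize}

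Two points need fixing. First, since $C_\psi C_\chi=C_{\chi\circ\psi}$, your identity actually gives $C_{Az}\circ C_{z+A^{-1}B}=C_{Az+A^{-1}B}$. The correct factorizations are $C_{Az+B}=C_{z+A^{-1}B}\circ C_{Az}=C_{Az}\circ C_{z+B}$. This is harmless, since you abandon that route.

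Second, for singular $A$ the perturbation $\sigma_j=0\mapsto\varepsilon$ proves nothing, because your constant $|\det A_\varepsilon|^{-2}e^{C/2}$ blows up as $\varepsilon\to 0$. Make the other option precise instead. Let $\Pi$ be the coordinate projection killing the variables with $\sigma_j=0$. Let $D$ be the invertible diagonal matrix with $D_{jj}=\sigma_j$ if $\sigma_j>0$ and $D_{jj}=1/2$ otherwise. Then $\Sigma=D\Pi$ and $C_{\Sigma z+B'}=C_\Pi\circ C_{Dz+B'}$. Here $C_\Pi f=f\circ\Pi$ is the orthogonal projection onto the closed span of the monomials not involving the killed variables, so boundedness and compactness follow from the invertible case.

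For comparison, there is a shorter route using the Weyl operators $W_a$ you introduce but never use. For $b\in\CC^d$, a direct computation gives
\[
W_{-A^*b}\,C_{Az}\,W_b=e^{(|A^*b|^2-|b|^2)/4}\,C_{Az-(I_d-AA^*)b}.
\]
When $\|A\|\le 1$ one has $\ker(I_d-AA^*)=\{A\zeta:\ |A\zeta|=|\zeta|\}$. So the compatibility condition in (a) says exactly that $B\in\mathrm{Ran}(I_d-AA^*)$, i.e.\ $B=-(I_d-AA^*)b$ for some $b$. Thus $C_{Az+B}$ is a positive multiple of a unitary conjugate of $C_{Az}$. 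After the singular value reduction, your monomial computation shows $C_{Az}$ is a contraction. It is compact if and only if $\|A\|<1$, since if some $\sigma_j=1$ it fixes every $z_j^n$.

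This gives the sufficiency in (a) and, together with your necessity argument, both directions of (b), with no splitting and no invertibility issue. Since $\|C_{Az}\|=1$, it also yields the norm formula \eqref{norm} (take $z_0=-A^*b$), which the paper relies on later. Your approach establishes boundedness and compactness but not the exact norm.
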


For convenience, we denote by $\mathcal{S}_{c}(\mathbb{C}^{d})$ the above self-maps of $\mathbb{C}^{d}$ inducing bounded composition operators on $\fcd$.
Carswell et al. \cite{CMS} have also
computed the norm of $C_\varphi$:
\begin{equation}\label{norm}
  \|C_{\varphi}\|=\exp\left(\frac{|z_0|^2 -|Az_0|^2+|B|^2}{4}\right)
\end{equation}
where $z_{0}$ is any solution of $(I_{d}-A^{\ast}A)z=A^{\ast}B$.
Here, and in the sequel, we denote by $A^{\ast}$ the conjugate transpose of the matrix $A$.

The aim of this paper is to extend the results of \cite{ChP} to several complex variables. More precisely, we study the three following questions:
\begin{itemize}
\item Let $\varphi\in\sccd.$ When does $(C_\varphi^n)$ converge in $\mathcal L(\fcd)$?
\item Let $(\varphi_t)_{t\geq 0}$ be a family of holomorphic self-maps of $\CC^d$. When is $(\varphi_t)_{t\geq 0}$
a continuous semigroup of $\mathcal S_{c}(\CC^d)$?
\item Let $(\varphi_t)_{t\geq 0}$ be a family of holomorphic self-maps of $\CC^d$. When is $(C_{\varphi_t})_{t\geq 0}$ a strongly continuous semigroup on $\fcd$?
\end{itemize}

We completely answer these three questions in Theorem \ref{thm:iteration},  Theorem \ref{thm:continuoussemigroup} and Theorem \ref{thm:c0}. As a byproduct of our results, we show in Example \ref{example} that the classical Berkson--Porta theorem fails in our context, i.e., there is a continuous semigroup $(\varphi_t)_{t\geq 0}$ of $\mathcal S_c(\CC^d)$ such that $(C_{\varphi_t})_{t\geq 0}$ is not a strongly continuous semigroup on $\mathcal F(\CC^d).$ Furthermore, we compute the infinitesimal generators of these semigroups. Finally we show that any bounded composition operator is strongly reducible on the Fock space (see Theorem \ref{fc-ness}), and we characterize mean and uniformly mean ergodic composition operators on $\fcd$ (see Theorem \ref{thm:mean-ergodic}).

\medskip

To obtain some of our results, it will be easier to work with $\varphi$ written in a simpler form. Let us write $\varphi(z)=Az+B$ and let $P,Q$
be two unitary matrices. Then $C_P,\ C_Q$ define bounded (and even unitary) operators on $\mathcal F(\CC^d)$
and
\begin{equation}\label{eq:unitary}
C_Q\circ C_{Az+B}\circ C_P=C_{PAQz+PB}.
\end{equation}
The following lemma is Proposition 2.2 of \cite{BT}.

\begin{lemma}\label{simility}
Let $\varphi(z)=Az+B\in\sccd$ and {let $p$ be the sum of the algebraic multiplicities of the eigenvalues of $A$ of modulus strictly less than $1.$ Then there exists a unitary matrix $P\in\mathcal M_d(\CC)$ such that,} setting $S=P^*AP$ and $v=P^* B,$
\begin{equation*}
  S=\begin{pmatrix} T & 0\\
  0 & U \end{pmatrix},
\end{equation*}
where $T\in \mathcal M_p(\mathbb C)$ is an upper triangular matrix such that its diagonal contains all the eigenvalues of $A$ of modulus strictly less than 1 and $U\in \mathcal M_{d-p}(\mathbb C)$ is a diagonal matrix containing all
the eigenvalues of $A$ of modulus $1$, $v\in \mathbb{C}^{p}\times \{0\}^{d-p}$ and $C_{\varphi}$ is similar to $C_{Sz+v}$, more precisely $C_\varphi=C_{P^*}\circ C_{Sz+v}\circ C_{P}.$
\end{lemma}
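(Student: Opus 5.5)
Since $\|A\|\le 1$, the eigenvalues of $A$ lie in the closed unit disc. We split them into those of modulus $<1$ (total algebraic multiplicity $p$) and those of modulus $1$. The plan is to obtain the block form by a Schur-type triangularization, then use the contraction property to kill the off-diagonal blocks, and finally use the boundedness condition of Theorem \ref{bound-com}(a) to place $B$ in the first $p$ coordinates. The similarity statement is then immediate from \eqref{eq:unitary}.

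\textbf{Step 1 (Schur form).} By Schur's theorem there is a unitary $P$ such that $P^*AP$ is upper triangular, and we may order the diagonal so that the $p$ eigenvalues of modulus $<1$ come first and those of modulus $1$ come last. This gives
\[
P^*AP=\begin{pmatrix} T & X\\ 0 & V\end{pmatrix},
\]
with $T$ upper triangular ($p\times p$) and $V$ upper triangular with unimodular diagonal. Since $\|P^*AP\|=\|A\|\le1$, the matrix $V$ is a contraction with all eigenvalues on the unit circle.

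\textbf{Step 2 (the unimodular block).} We use the standard fact that a contraction $C$ with $Cx=\lambda x$ and $|\lambda|=1$ also satisfies $C^*x=\bar\lambda x$. The proof: $\|C^*x-\bar\lambda x\|^2=\|C^*x\|^2-2\mathrm{Re}\,\lambda\langle C^*x,x\rangle+\|x\|^2\le 2\|x\|^2-2\|x\|^2=0$.

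Apply this to $V$ by induction on its first column: $e_1$ is an eigenvector, so the first row of $V$ is zero off the diagonal, and we iterate. Hence $V$ is diagonal, $V=U$.

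Now apply the same fact to $S_0=P^*AP$ itself. Each $e_j$ with $j>p$ is an eigenvector of $S_0^*$ with eigenvalue $\bar u_j$. Writing out $S_0^*e_j=\bar u_je_j$ forces the $j$-th row of $S_0$ to be $u_je_j^T$, i.e.\ the corresponding column of $X^*$ vanishes, so $X=0$. Call the resulting block-diagonal matrix $S$.

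\textbf{Step 3 (the translation vector).} Set $v=P^*B$. The condition of Theorem \ref{bound-com}(a) is $\langle A\zeta,B\rangle=0$ whenever $|A\zeta|=|\zeta|$. Taking $\zeta=Pe_j$ with $j>p$ gives $|A\zeta|=|Se_j|=|u_j|=1$, so
\[
0=\langle APe_j,B\rangle=\langle Se_j,v\rangle=u_j\overline{v_j}.
\]
Hence $v_j=0$ for all $j>p$.

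\textbf{Step 4 (similarity).} Finally, \eqref{eq:unitary} with the unitaries $P$ and $P^*$ gives
\[
C_{P^*}\circ C_{Sz+v}\circ C_P=C_{PSP^*z+Pv}=C_{Az+B}.
\]

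The only substantive step is Step 2, the eigenvector fact for contractions. It is what kills both the upper-triangular part of $V$ and the coupling block $X$; the remaining steps are bookkeeping.
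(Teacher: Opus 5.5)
Your overall route is the natural one: Schur form with the eigenvalues of modulus $<1$ first, the unimodular-eigenvector lemma for contractions, then Theorem \ref{bound-com}(a) for $v$ and \eqref{eq:unitary} for the similarity. The paper gives no proof of its own here; it cites \cite[Proposition 2.2]{BT}, and it invokes that Schur-plus-contraction argument again in the proof of Theorem \ref{thm:continuoussemigroup}. Step 1, your lemma, the diagonalization of $V$, and Steps 3 and 4 are fine.

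\textbf{The elimination of $X$ fails as written.} Take $j>p$ and write $S_0=\begin{pmatrix}T&X\\0&U\end{pmatrix}$, with $V=U$ already diagonal. The $j$-th row of $S_0$ lies in the block row $(0\ \ U)$, so it equals $u_je_j^T$ automatically. Thus the relation $S_0^*e_j=\bar u_je_j$ holds for free and contains no entry of $X$, since all entries of $X$ sit in rows $1,\dots,p$. Writing it out therefore tells you nothing about $X$. The block $X$ lives in the $j$-th \emph{column}, $S_0e_j=\begin{pmatrix}Xe_{j-p}\\u_je_{j-p}\end{pmatrix}$. You cannot apply your lemma to $C=S_0$ with $x=e_j$, because $e_j$ being an eigenvector of $S_0$ is exactly the claim $Xe_{j-p}=0$.

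\textbf{The repair uses your own lemma, applied to the other operator.} $C=S_0^*$ is a contraction, and $e_j$ is an eigenvector of it with unimodular eigenvalue $\bar u_j$ (the observation you did make). The lemma then gives $S_0e_j=(S_0^*)^*e_j=u_je_j$, i.e.\ $Xe_{j-p}=0$ for every $j>p$, hence $X=0$.

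Even more directly, for $j>p$ you have $|S_0e_j|\le\|S_0\|\le1$, while the $j$-th coordinate of $S_0e_j$ is a diagonal entry of modulus one. So every other coordinate of that column vanishes. This kills $X$ and the strictly upper part of $V$ at once, without the eigenvector lemma.

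The correction matters downstream: Step 3 uses $Se_j=u_je_j$, both for $|A\zeta|=1$ and for $\langle Se_j,v\rangle=u_j\overline{v_j}$. That identity is exactly the vanishing of the relevant column of $X$.
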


\section{Iteration} \label{sec:iteration}

In this section, we prove the following result, which extends \cite[Theorem 2.2]{ChP}.

\begin{theorem}\label{thm:iteration}
{Let $\varphi(z)=Az+B\in\sccd$, let $p$ be the sum of the algebraic multiplicities of the eigenvalues of $A$ of modulus strictly less than $1$.}
Let also $\sigma(A)$ be the spectrum of $A$ and let $P$, $T$ be the matrices obtained in Lemma \ref{simility}.
\begin{enumerate}
\item[(a)] If $\sigma(A)\cap(\TT\backslash\{1\})\neq\varnothing,$ the sequence $(C_\varphi^n)$ does not converge for the weak operator topology.
\item[(b)] If $\sigma(A)\cap(\TT\backslash\{1\})=\varnothing,$ then $(C_\varphi^n)$ converges in norm to $C_\psi,$
with $\psi(z)=Qz+R$ where
$$Q=P\widetilde Q P^*\textrm{ where }\widetilde Q=\begin{pmatrix} 0&0\\0&I_{d-p}\end{pmatrix}$$
$$R=(I_{d}-PNP^*)^{-1}B\textrm{ where }N=\begin{pmatrix} T&0\\0&0\end{pmatrix}.$$
\end{enumerate}
\end{theorem}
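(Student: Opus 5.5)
The plan is to reduce everything, via Lemma \ref{simility}, to the block-diagonal model $Sz+v$, and then to use the explicit form of the iterates. Since $C_\varphi=C_{P^*}\circ C_{Sz+v}\circ C_P$ with $C_P,C_{P^*}$ unitary and mutually inverse, we get $C_\varphi^n=C_{P^*}\circ C_{Sz+v}^n\circ C_P$. Hence convergence of $(C_\varphi^n)$ in the weak operator topology or in norm is equivalent to the same convergence for $(C_{Sz+v}^n)$. Writing $z=(z',z'')\in\CC^p\times\CC^{d-p}$ and $v=(v',0)$, the $n$-th iterate of $Sz+v$ is
$$\varphi_n(z)=\bigl(T^nz'+(I_p+T+\dots+T^{n-1})v',\ U^nz''\bigr),$$
and $C_{Sz+v}^n=C_{\varphi_n}$.

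For (a), suppose some diagonal entry $\lambda=U_{jj}$ satisfies $|\lambda|=1$ and $\lambda\neq 1$. The corresponding coordinate $z_{p+j}$ of the $U$-block satisfies $z_{p+j}\circ\varphi_n=\lambda^nz_{p+j}$, because $v$ has no component there. Therefore $\langle C_{\varphi_n}z_{p+j},z_{p+j}\rangle=\lambda^n\|z_{p+j}\|^2$, which does not converge. So $(C_{Sz+v}^n)$ does not converge weakly, and neither does $(C_\varphi^n)$.

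For (b), we have $U=I_{d-p}$ and the spectral radius of $T$ is $<1$, so $\|T^n\|\to0$ geometrically and $w_n:=(I_p+\dots+T^{n-1})v'\to w:=(I_p-T)^{-1}v'$. The candidate limit is $C_{\widetilde\psi}$ with $\widetilde\psi(z)=(w,z'')=\widetilde Qz+(w,0)$.

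To get norm convergence I would use the tensor factorization $\fcd\simeq\mathcal F(\CC^p)\otimes\mathcal F(\CC^{d-p})$. Under it, $C_{\varphi_n}=C_{T^nz'+w_n}\otimes I$ and $C_{\widetilde\psi}=C_{w}\otimes I$, where $C_w$ is the rank-one map $f\mapsto f(w)$. This reduces the problem to showing $\|C_{T^nz+w_n}-C_w\|\to0$ on $\mathcal F(\CC^p)$. I expect this norm estimate to be the main step, and I would dominate the operator norm by the Hilbert--Schmidt norm.

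With the reproducing kernel $K(z,\zeta)=e^{\langle z,\zeta\rangle/2}$ and the Gaussian probability measure $d\mu$ defining the norm, one has $\langle C_\alpha,C_\beta\rangle_{HS}=\int K(\beta(z),\alpha(z))\,d\mu(z)$. Apply this with $\alpha_n(z)=T^nz+w_n$ and $\beta=w$ constant. The cross term is $\int e^{\langle T^nz+w_n,w\rangle/2}d\mu$, which is the integral of a holomorphic function against a radial measure. It therefore equals its value at $0$, namely $e^{\langle w_n,w\rangle/2}$. Also $\|C_w\|_{HS}^2=e^{|w|^2/2}$. Hence
$$\|C_{\alpha_n}-C_w\|_{HS}^2=\int e^{|T^nz+w_n|^2/2}d\mu(z)-2\,\mathrm{Re}\,e^{\langle w_n,w\rangle/2}+e^{|w|^2/2}.$$
Once $\|T^n\|$ is small (say $\le 1/4$), the integrand is dominated by an integrable Gaussian, uniformly in $n$. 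By dominated convergence the first term then tends to $e^{|w|^2/2}$, so the whole expression tends to $0$.

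Finally I would transfer back and identify the limit. Write $C_\varphi^n=C_{P^*}C_{\varphi_n}C_P$ and apply \eqref{eq:unitary}. Then $C_\varphi^n\to C_{P^*}C_{\widetilde\psi}C_P=C_\psi$, where $\psi(z)=P\widetilde QP^*z+P(w,0)$. It remains to identify the translation part. Since $v=(v',0)$, we have $(w,0)=(I_d-N)^{-1}v$. Consequently
$$P(w,0)=P(I_d-N)^{-1}P^*B=(I_d-PNP^*)^{-1}B=R,$$
as claimed.
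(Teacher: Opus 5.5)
Your proof is correct. Part (a) is exactly the paper's argument. So is the reduction in (b): unitary conjugation by Lemma \ref{simility}, the explicit iterates, and splitting off the $z''$-variables. The paper does that splitting by Fubini over $w\in\CC^{d-p}$, which is the same as your factorization $\fcd\simeq\mathcal F(\CC^p)\otimes\mathcal F(\CC^{d-p})$.

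The genuine difference is the key estimate $\|C_{T^nz+w_n}-C_w\|\to 0$ on $\mathcal F(\CC^p)$. The paper rescales $\xi=z/2$ and splits the integral into a compact set $K$ and its complement. The tail is absorbed by the Gaussian factor together with the uniform boundedness of $C_{\gamma_n}$, $\gamma_n(\xi)=2T^n\xi+B_n$, which comes from the norm formula \eqref{norm}. On $K$ it uses Cauchy's inequalities for each $g_w$.

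You instead dominate the operator norm by the Hilbert--Schmidt norm and compute that exactly through the kernel:
\begin{itemize}
\item $\|C_{\alpha_n}\|_{HS}^2=\int e^{|\alpha_n|^2/2}\,d\mu$, by Tonelli;
\item the cross term equals $e^{\langle w_n,w\rangle/2}$, and you could also read it off without the mean-value argument as $\langle C_{\alpha_n}k_w,\mathbb 1\rangle$, since $C_w=\mathbb 1\otimes k_w$ has rank one;
\item dominated convergence then applies once $\|T^n\|\le 1/4$.
\end{itemize}

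What your route buys:
\begin{itemize}
\item It is shorter and self-contained, and it does not need the norm formula of \cite{CMS}.
\item It gives the stronger statement that $C_{T^nz+w_n}\to C_w$ in Hilbert--Schmidt norm on $\mathcal F(\CC^p)$. This is lost after tensoring with the identity when $p<d$, but operator-norm convergence survives, which is all you need.
\item The Gaussian integrals $\int e^{|Mz+c|^2/2}\,d\mu$ are explicit and smooth in $(M,c)$, so your computation even yields a geometric rate of convergence.
\end{itemize}

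The paper's argument, by contrast, is purely qualitative: uniform boundedness plus locally uniform convergence, with the weight absorbing the tail. That scheme does not depend on having a closed-form kernel computation, so it is the more portable of the two.
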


\begin{proof}
We may and shall assume that $A$ and $B$ have the form given by Lemma \ref{simility}. Assume first that there exists
$\lambda \in\sigma(A) \cap(\TT\backslash \{1\}).$ Then $\lambda$ is a diagonal entry of $U$, the corresponding entry of $B$ is equal to $0,$ and we may assume that $C_\varphi(z_d)=\lambda z_d, $ so that $C_\varphi^{n}(z_d)=\lambda^n z_d$ for all $n\geq 1.$ Therefore, the sequence $(C_\varphi^n)$ fails to converge for the weak operator topology.

Conversely, assume that $\sigma(A)\cap(\TT\backslash \{1\})=\varnothing,$ which yields $U=I_{d-p}.$ The $n$-th iterate of $\varphi$ is given by
$$\varphi^{[n]}(z)=A^n z+\sum_{k=0}^{n-1}A^k B.$$
Since $B\in\CC^p\times\{0\}^{d-p},$ we know that
\begin{equation*}
  \sum_{k=0}^{n-1}A^k B=\sum_{k=0}^{n-1}N^k B
=(I_d-N)^{-1}(I_d-N^n)B
\end{equation*}
(recall that $0\notin\sigma(I_{d}-N)$). Therefore, for any $f\in \fcd$ and for any $z\in\CC^d,$
$$f( \varphi^{[n]}(z))\xrightarrow{n\to\infty} f(\widetilde Qz+(I_d-N)^{-1}B).$$
Let $\psi(z)=\widetilde Q(z)+(I_d-N)^{-1}B$ and let us show that $\|C_{\varphi^{[n]}}-C_\psi\|\to 0.$
For this part of the proof, our argument does not follow \cite{ChP} where it was used that $\ker(I-C_\varphi)$ has dimension $1,$ which is not the case here if $1\in\sigma(A).$

Let $f\in\fcd,$ fix $w=(w_{p+1},\dots,w_d)\in\CC^{d-p}$ and consider $g_w(z)=f(z,w)\in\mathcal F(\CC^p)$
which satisfies
$$\|f\|^2=\frac1{(2\pi)^{d-p}}\int_{\CC^{d-p}}\|g_w\|_{\mathcal F(\CC^p)}^2 e^{-|w|^2/2}dA(w)$$
and, due to the form of $A$ and $B,$
\begin{equation*}
\|C_\varphi^n (f)-C_\psi(f)\|^2=\frac1{(2\pi)^{d-p}}\int_{\CC^{d-p}}\|C_{\varphi_0}^n(g_w)-C_{\psi_0}(g_w)\|_{\mathcal F(\CC^p)}^2e^{-|w|^2/2}dA(w),
\end{equation*}
where $\varphi_0:\CC^p\to\CC^p,\ z\mapsto Tz+B_0,$ $\psi_0:\CC^p\to\CC^p,\ z\mapsto (I_p-T)^{-1}B_0,$
with $B_0$ the projection of $B$ onto $\vect(e_1,\dots,e_p).$ Write $\varphi_0^{[n]}(z)=T^n z+(I_p-T)^{-1}(I_p-T^n)B_0$ and set $B_n=(I_p-T)^{-1}(I_p-T^n)B_0,$ $F=(I_p-T)^{-1}B_0.$  Then
$$\begin{array}l
\displaystyle \|C_{\varphi_0}^n(g_w)-C_{\psi_0}(g_w)\|_{\mathcal F(\CC^p)}^2\\
\displaystyle \quad\quad=\frac1{(2\pi)^p}\int_{\CC^p}
|g_w(T^n z+B_n)-g_w(F)|^2e^{-|z|^2/2}dA(z)\\
\displaystyle\quad\quad =\frac{2^{2p}}{(2\pi)^p} \int_{\CC^p}
|g_w(2T^n \xi+B_n)-g_w(F)|^2e^{-2|\xi|^2} dA(\xi)
\end{array}
$$
where we have set $\xi=z/2.$
Let $\veps>0$ and let $K$ be a compact subset of $\CC^p$ such that, for all $\xi\notin K,$
$$ 2^{2p}e^{-2|\xi|^2} \leq \veps^2 e^{-|\xi|^2/2}.$$
Then
$$\begin{array}l
\displaystyle\frac{2^{2p}}{(2\pi)^p} \int_{\CC^p\backslash K}
|g_w(2T^n \xi+B_n)-g_w(F)|^2e^{-2|\xi|^2} dA(\xi)\\
\displaystyle \quad\quad \leq \frac{\veps^2} {(2\pi)^p}\int_{\CC^p}|g_w(2T^n \xi+B_n)-g_w(F)|^2 e^{-|\xi|^2/2}dA(\xi).
\end{array}$$
Let $n_0\in\mathbb N$ be such that $\|2T^n\|<1$ for $n\geq n_0,$ and for $n\geq n_0$, let $z_n$ be any solution to $(I_p-(2T^n)^*(2T^n))z_n=(2T^n)^*B_n$. Observe that since $(T^n)$ goes to $0$ and $(B_n)$ goes to $F,$ $(z_n)$ is bounded (it even goes to $0$).
Therefore \eqref{norm} shows that $(\|C_{\gamma_n}\|)_{n\geq n_0}$ is bounded, where $\gamma_n:\CC^p\to\CC^p,$ $\xi\mapsto 2T^n \xi+B_n$. Hence we get the existence of $C>0$ such that
$$ \frac{1} {(2\pi)^p}\int_{\CC^p}|g_w(2T^n \xi+B_n)-g_w(F)|^2 e^{-|\xi|^2/2}dA(\xi)\leq C \|g_w\|_{\mathcal F(\CC^p)}^2.$$
Moreover, since $K$ is compact, since $(T^n)$ goes to $0$ and $(B_n)$ goes to $F,$ an application of Cauchy's inequalities yields the existence of $n_1\in\mathbb N$ such that, for all $n\geq n_1,$
$$\frac{2^{2p}}{(2\pi)^p} \int_{K}
|g_w(2T^n \xi+B_n)-g_w(F)|^2 e^{-2|\xi|^2}dA(\xi)\leq \veps^2\|g_w\|_{\mathcal F(\CC^p)}^2.$$
Finally, we have found, for any $n\geq \max(n_0,n_1),$
$$\|C_\varphi^n (f)-C_\psi(f)\|^2\leq (C+1)\veps^2\|f\|^{2},$$
which completes the proof that $(C_{\varphi}^{n})$ converges to $C_\psi$ in $\mathcal L(\fcd).$
\end{proof}

\section{Semigroups of $\mathcal S_c(\CC^d)$}
In this section, we investigate continuous semigroups of maps belonging to $\mathcal S_c(\CC^d)$.
We first recall some terminology.

\begin{definition}
Let $(\varphi_{t})_{t\geq 0}$ be a family of holomorphic self-maps of $\CC^d$.
We say that $(\varphi_{t})_{t\geq0}$ is a \emph{continuous semigroup} if
\begin{itemize}
  \item[(i)] $\varphi_{0}(z)=z$, $z\in\CC^d$;
  \item[(ii)] For every $t,s\geq0$, $\varphi_{t}\circ\varphi_{s}=\varphi_{t+s}$;
  \item[(iii)] $\varphi_{t}\rightarrow\varphi_{0}$ uniformly on compact subsets of $\CC^d$ as $t\rightarrow0^{+}$.
\end{itemize}
\end{definition}

Following the pioneering work of Berkson and Porta in \cite{BP}, Abate \cite{Abate92} has shown that there exists a holomorphic map $G:\CC^d\to\CC^d$ such that $({\varphi_t})_{t\geq 0}$ is the solution of the Cauchy problem:
\begin{equation*}
\frac{\partial \varphi_t(z)}{\partial t}=G(\varphi_t(z)),\ t\in\RR_+,\ z\in\CC^d.
\end{equation*}
Then $G$ is called the \emph{infinitesimal generator} of
the continuous semigroup $\left(\varphi_t\right)_{t\geq0}$.

\smallskip

We start with a characterization of continuous semigroups of maps of $\mathcal S_c(\CC^d)$. {We first recall the Lumer-Phillips theorem \cite{LP} (or \cite[Section II.3.b]{EN06}):}
for all $E\in\mathcal M_d(\CC),$ $\|\exp(tE)\|\leq 1$ for all $t\geq 0$ if and only if $\sigma(E+E^*)\subset(-\infty,0]$. We will need the following lemmas.

\begin{lemma}\label{lem:sg1}
Let $E\in\mathcal M_d(\CC)$ with $\sigma(E+E^*)\subset (-\infty,0]$ and let $\xi \in \CC^d.$ Then
$$\Re e\langle E\xi,\xi\rangle=0\iff \xi\in\ker(E+E^*).$$
\end{lemma}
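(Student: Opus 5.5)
The plan is to reduce everything to the Hermitian matrix $H=E+E^*$. For every $\xi\in\CC^d$ we have
$$2\,\Re e\langle E\xi,\xi\rangle=\langle E\xi,\xi\rangle+\overline{\langle E\xi,\xi\rangle}=\langle E\xi,\xi\rangle+\langle \xi,E\xi\rangle=\langle (E+E^*)\xi,\xi\rangle=\langle H\xi,\xi\rangle .$$
So the statement is equivalent to: $\langle H\xi,\xi\rangle=0$ if and only if $H\xi=0$. This is to be proved for a Hermitian matrix $H$ with $\sigma(H)\subset(-\infty,0]$, that is, for $-H$ positive semidefinite.

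The implication from right to left is immediate: if $H\xi=0$, then $\langle H\xi,\xi\rangle=0$, hence $\Re e\langle E\xi,\xi\rangle=0$.

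For the converse, we use that $-H$ is Hermitian with nonnegative spectrum. By the spectral theorem, $-H$ has a positive semidefinite square root $R=(-H)^{1/2}$. Then
$$0=-\langle H\xi,\xi\rangle=\langle R^2\xi,\xi\rangle=|R\xi|^2 ,$$
so $R\xi=0$, and therefore $H\xi=-R(R\xi)=0$.

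An alternative avoids square roots. Diagonalize $H=V^*DV$ with $V$ unitary and $D=\mathrm{diag}(\mu_1,\dots,\mu_d)$, $\mu_j\leq 0$, and write $\eta=V\xi$. Then $\langle H\xi,\xi\rangle=\sum_j\mu_j|\eta_j|^2$. Since every term is $\leq 0$, the sum vanishes only if each term vanishes, so $\eta_j=0$ whenever $\mu_j\neq 0$. This gives $D\eta=0$ and hence $H\xi=0$.

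There is no real obstacle here. The only point needing care is the hypothesis $\sigma(E+E^*)\subset(-\infty,0]$: semidefiniteness is exactly what makes the vanishing of the quadratic form force the vanishing of the vector. For an indefinite Hermitian $H$ the equivalence fails.
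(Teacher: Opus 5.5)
Your proof is correct and follows the paper's route. Both reduce the statement to $2\,\Re e\langle E\xi,\xi\rangle=\langle (E+E^*)\xi,\xi\rangle$ and then apply the spectral theorem to the Hermitian matrix $E+E^*$; your diagonalization argument is exactly the paper's orthogonal eigencomponent decomposition $\sum_i\lambda_i|\xi_i|^2$, and your square-root variant $|(-H)^{1/2}\xi|^2=0$ is just a repackaging of the same fact.
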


\begin{proof}
Since $E+E^*$ is hermitian, it is diagonalizable with real eigenvalues and orthogonal eigenspaces. We thus decompose $\xi$ as $\sum_{i=1}^n \xi_i,$ with $(E+E^*)\xi_i=\lambda_i \xi_i,$ $\lambda_1=0,$ $\lambda_2,\dots,\lambda_n<0$ and $\xi_i\perp \xi_j$ for $i\neq j.$ Therefore
\begin{equation*}
  2\Re e\langle E\xi,\xi\rangle=\langle (E+E^*)\xi,\xi\rangle
=\sum_{i=1}^n \lambda_i |\xi_i|^2\leq 0.
\end{equation*}
This is equal to $0$ if and only if $\xi=\xi_1,$ namely $\xi \in\ker(E+E^*).$
\end{proof}

\begin{lemma}\label{lem:sg2}
Let $E\in\mathcal M_d(\CC)$ with $\sigma(E+E^*)\subset (-\infty,0]$ and let $\xi \in \CC^d.$ The following assertions are equivalent.
\begin{enumerate}[(i)]
\item $\forall t_0>0,$ {$|\exp(t_0E)\xi|=|\xi|.$}
\item $\exists t_0>0,$ {$|\exp(t_0E)\xi|=|\xi|.$}
\item $\forall n\geq 0,$ $E^* E^n \xi=-E^{n+1}\xi.$
\item $\forall m,n\geq 0,$ $(E^*)^m E^n \xi=(-1)^m E^{m+n}\xi.$
\item $\forall t_0>0,$ $\exp(t_0E^*)\exp(t_0E)\xi=\xi.$
\end{enumerate}
Moreover, if $E$ is normal, this is equivalent to
\begin{enumerate}[(i)]
\setcounter{enumi}{5}
\item $E^*\xi=-E\xi.$
\end{enumerate}
\end{lemma}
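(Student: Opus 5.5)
The plan is to prove the cycle (i)$\Rightarrow$(ii)$\Rightarrow$(iii)$\Rightarrow$(iv)$\Rightarrow$(v)$\Rightarrow$(i), and afterwards to treat the normal case separately. The implication (i)$\Rightarrow$(ii) is trivial. Throughout, recall from the Lumer--Phillips theorem that $\|\exp(tE)\|\le 1$ for all $t\ge 0$. Consequently the function $h(t)=|\exp(tE)\xi|^2$ is nonincreasing, and its derivative is
\[
h'(t)=2\Re e\langle E\exp(tE)\xi,\exp(tE)\xi\rangle\le 0 .
\]

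For (ii)$\Rightarrow$(iii), suppose $h(t_0)=h(0)$ for some $t_0>0$. Monotonicity forces $h$ to be constant on $[0,t_0]$, so $h'\equiv 0$ there. Lemma \ref{lem:sg1} then gives $(E+E^*)\exp(tE)\xi=0$ for all $t\in[0,t_0]$. The map $t\mapsto (E+E^*)\exp(tE)\xi$ is real-analytic (indeed entire). Differentiating $n$ times at $t=0$ yields $(E+E^*)E^n\xi=0$, which is exactly $E^*E^n\xi=-E^{n+1}\xi$ for every $n\ge 0$.

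For (iii)$\Rightarrow$(iv), argue by induction on $m$; the case $m=0$ is trivial. Assuming the identity for $m$, compute
\[
(E^*)^{m+1}E^n\xi=E^*\bigl((-1)^mE^{m+n}\xi\bigr)=(-1)^m E^*E^{m+n}\xi=(-1)^{m+1}E^{m+n+1}\xi ,
\]
where the last equality uses (iii) with index $m+n$.

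For (iv)$\Rightarrow$(v), expand both exponentials as double series:
\[
\exp(t_0E^*)\exp(t_0E)\xi=\sum_{m,n}\frac{t_0^{m+n}}{m!\,n!}(E^*)^mE^n\xi=\sum_{m,n}\frac{(-t_0)^m t_0^n}{m!\,n!}E^{m+n}\xi .
\]
The last sum equals $\exp(-t_0E)\exp(t_0E)\xi=\xi$. Absolute convergence justifies the rearrangement.

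For (v)$\Rightarrow$(i), note that
\[
|\exp(t_0E)\xi|^2=\langle \exp(t_0E^*)\exp(t_0E)\xi,\xi\rangle=|\xi|^2 .
\]

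For the normal case, (iii) with $n=0$ gives (vi) directly. Conversely, suppose $E$ is normal and $E^*\xi=-E\xi$. Since $E$ and $E^*$ commute, $E^*E^n\xi=E^nE^*\xi=-E^{n+1}\xi$, which is (iii).

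The only delicate step is (ii)$\Rightarrow$(iii). There one must combine two facts: monotonicity, which comes from the contractivity, and the analytic continuation of $(E+E^*)\exp(tE)\xi=0$ from the interval $[0,t_0]$ to all derivatives at $0$. Everything else is algebraic.
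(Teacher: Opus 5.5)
Your proof is correct and follows the paper's argument essentially step for step. You use the same cycle (i)$\Rightarrow$(ii)$\Rightarrow$(iii)$\Rightarrow$(iv)$\Rightarrow$(v)$\Rightarrow$(i), and you get (ii)$\Rightarrow$(iii) the same way: contractivity makes $|\exp(tE)\xi|$ constant on $[0,t_0]$, Lemma \ref{lem:sg1} is applied to $\exp(tE)\xi$, and the Taylor coefficients at $t=0$ are read off; for the normal case, commuting $E^*$ past $E^n$ is exactly what the paper's ``straightforward induction'' amounts to.
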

\begin{proof}
$(ii)\implies (iii)$. Since $(\exp(tE))_{t\geq 0}$ is a contractive semigroup, {we get $|\exp(tE)\xi|=|\xi|$ for all $t\in[0,t_0]$.
We set $u(t)=|\exp(tE)\xi|^2$} and we differentiate $u$ to find
\begin{align*}
u'(t)&=\langle E\exp(tE)\xi,\exp(tE)\xi\rangle+\langle \exp(tE)\xi,E\exp(tE)\xi\rangle\\
&=2\Re e\langle E\exp(tE)\xi,\exp(tE)\xi\rangle
\end{align*}
which has to be equal to $0$ on $[0,t_0]$. By Lemma \ref{lem:sg1}, this implies
$$E^*(\exp(tE)\xi)=-E(\exp(tE)\xi)$$
which yields, for all $t\in[0,t_0],$
$$\sum_{n=0}^{+\infty}\frac{t^n}{n!}E^* E^n \xi=-\sum_{n=0}^{+\infty}\frac{t^n}{n!}E^{n+1}\xi.$$
Since $t_0>0,$ we get $(iii)$.

$(iii)\implies (iv)$. This follows from an easy induction.

$(iv)\implies (v)$. We write
\begin{align*}
\exp(t_0 E^*)\exp(t_0 E)\xi&=\sum_{m,n\geq 0}\frac{t_0^{m+n}}{m!n!}(E^*)^m E^n\xi\\
&=\sum_{m,n\geq 0}\frac{t_0^{m+n}}{m!n!}(-1)^m E^{m+n}\xi\\
&=\exp(-t_0E)\exp(t_0E)\xi=\xi.
\end{align*}

$(v)\implies (i).$ We write
$$
|\exp(t_0E)\xi|^2=\langle \exp(t_0E^{*})\exp(t_0E)\xi,\xi\rangle=|\xi|^2.
$$
When $E$ is normal, a straightforward induction shows that $(iii)$ is equivalent to $(vi)$.
\end{proof}

\begin{theorem}\label{thm:continuoussemigroup}
Let $(\varphi_t)_{t\geq 0}$ be a family of holomorphic self-maps of $\CC^d$. Then $(\varphi_t)_{t\geq 0}$ is a continuous semigroup of maps belonging to $\mathcal S_c(\CC^d)$ if and only if there exist $E\in\mathcal M_d(\CC)$ and $F\in \CC^d$, such that
\begin{enumerate}[(a)]
\item for all $t\geq 0,$ $\varphi_t(z)=\exp(tE)z+(\exp(tE)-I_d)F.$
\item $\sigma(E+E^*)\subset (-\infty,0].$
\item $F\perp E\left(\bigcap_{n\geq 0}\ker( (E+E^*)E^n)\right).$
\end{enumerate}
Moreover, if these conditions are satisfied, the infinitesimal generator of $(\varphi_t)_{t\geq 0}$ is the map $G:\CC^d\to\CC^d$, $z\mapsto E(z+F).$
\end{theorem}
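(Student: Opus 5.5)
Suppose first that $(\varphi_t)_{t\geq 0}$ is a continuous semigroup in $\mathcal S_c(\CC^d)$. By Theorem \ref{bound-com}, each $\varphi_t(z)=A_tz+B_t$ with $\|A_t\|\leq 1$. The semigroup law $\varphi_t\circ\varphi_s=\varphi_{t+s}$ gives $A_tA_s=A_{t+s}$ and $B_{t+s}=A_tB_s+B_t$. Uniform convergence on compacta as $t\to 0^+$ gives $A_t\to I_d$ and $B_t\to 0$. Hence $(A_t)$ is a norm-continuous matrix semigroup, so $A_t=\exp(tE)$ for some $E\in\mathcal M_d(\CC)$. Since $\|\exp(tE)\|\leq 1$, the Lumer--Phillips theorem gives (b).

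For $B_t$, the cocycle identity together with continuity shows that $t\mapsto B_t$ is continuous, and then differentiable: integrate the identity in $s$ over $[0,h]$ and use that $\int_0^h B_s\,ds$ is invertible-scaled. Differentiating $B_{t+s}=A_sB_t+B_s$ in $s$ at $s=0$ yields $B_t'=EB_t+b$, where $b=B'_0$, so $B_t=\int_0^t\exp(sE)b\,ds$. It remains to put this in the form $(\exp(tE)-I_d)F$. I would use the boundedness condition of Theorem \ref{bound-com}: $B_t\perp \exp(tE)\zeta$ whenever $|\exp(tE)\zeta|=|\zeta|$. Decompose $\CC^d=\operatorname{ran}E\oplus\ker E^*$ and write $b=b_1+b_2$ with $b_1=EF$ and $b_2\in\ker E^*$. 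Then $\langle E\zeta,b_2\rangle=0$ for all $\zeta$, so $\zeta\in\ker E$ gives $|\exp(tE)\zeta|=|\zeta|$. From $E^*b_2=0$ and $\Re\langle Eb_2,b_2\rangle=0$, Lemma \ref{lem:sg1} gives $Eb_2=-E^*b_2=0$, so $b_2\in\ker E$. Taking $\zeta=b_2$, we get $\exp(sE)b_2=b_2$ and $B_t=(\exp(tE)-I_d)F+tb_2$. The condition $\langle B_t,\exp(tE)b_2\rangle=\langle B_t,b_2\rangle=0$ then forces $t|b_2|^2=0$, since $\langle(\exp(tE)-I)F,b_2\rangle=\langle F,(\exp(tE^*)-I)b_2\rangle=0$ (because $E^*b_2=0$). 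Hence $b_2=0$, which proves (a).

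For (c), combine Lemma \ref{lem:sg2}: $\zeta\in K:=\bigcap_{n}\ker((E+E^*)E^n)$ is exactly condition (iii), i.e.\ $|\exp(tE)\zeta|=|\zeta|$ for all $t$. Boundedness then says $\langle(\exp(tE)-I)F,\exp(tE)\zeta\rangle=0$ for all $t$ and all $\zeta\in K$. I would differentiate at $t=0$, giving $\langle EF,\zeta\rangle+\langle F,E\zeta\rangle=0$. Using $E^*\zeta=-E\zeta$ for $\zeta\in K$, the first term is $\langle F,E^*\zeta\rangle=-\langle F,E\zeta\rangle$, so this is trivial, and I need more. Instead, expand in $t$: by Lemma \ref{lem:sg2}(v), $\exp(tE^*)\exp(tE)\zeta=\zeta$, so the condition reads $\langle F,\zeta-\exp(tE^*)\zeta\rangle=0$. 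The derivative at $0$ gives $\langle F,E^*\zeta\rangle=0$, i.e.\ $F\perp E\zeta$ since $E^*\zeta=-E\zeta$. This proves (c).

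Conversely, given (a)--(c), the semigroup law and continuity are direct computations. For boundedness, $\|\exp(tE)\|\leq1$ holds by Lumer--Phillips. The computation above reverses: for $\zeta$ with $|\exp(tE)\zeta|=|\zeta|$ we have $\zeta\in K$, and $\langle(\exp(tE)-I)F,\exp(tE)\zeta\rangle=\langle F,\zeta-\exp(tE^*)\zeta\rangle$. Since $K$ is $E$-invariant and $E^*=-E$ on $K$, we get $\exp(tE^*)\zeta=\exp(-tE)\zeta$, so $\zeta-\exp(-tE)\zeta\in E(K)$, and this term vanishes by (c). The generator is $\partial_t\varphi_t=E\exp(tE)(z+F)=E(\varphi_t(z)+F)$. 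I expect the main obstacle to be the differentiability and form of $B_t$, in particular eliminating the $\ker E^*$ component.
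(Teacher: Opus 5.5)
Your proposal is correct. Apart from (a), it runs parallel to the paper: Lumer--Phillips for (b), Lemma \ref{lem:sg2} plus the boundedness criterion for (c) and the converse, and direct differentiation for the generator.

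The genuine difference is how you obtain $B_t=(\exp(tE)-I_d)F$ with $F$ independent of $t$. The paper argues at each fixed $t$. It quotes \cite[Lemma 2.3]{BT} to write $B_t=(\exp(tE)-I_d)F_t$. It then uses the block form of $\exp(tE)$ and the cocycle identity to make $F_t$ constant: on one block $\exp(tE)$ is a strict contraction and $\exp(tE)-I_d$ is invertible, and the other block is unitary, on a subspace orthogonal to $B_t$. You instead solve the cocycle, $B_t=\int_0^t\exp(sE)b\,ds$, and split $b=EF+b_2$ with $b_2\in\ker E^*$. You deduce $b_2\in\ker E$ from Lemma \ref{lem:sg1}, and kill $b_2$ by testing the boundedness condition on $\zeta=b_2$.

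Your route is self-contained: it uses only Theorem \ref{bound-com}(a) and Lemma \ref{lem:sg1}, and none of the structure results of \cite{BT}. The price is a regularity statement for $t\mapsto B_t$, whereas the paper's route is purely algebraic. Your justification of that regularity is garbled, since $\int_0^hB_s\,ds$ is a vector and there is nothing to invert. Either fix works:
\begin{itemize}
\item integrate $B_{t+s}=A_tB_s+B_t$ over $s\in[0,h]$ to get $hB_t=\int_t^{t+h}B_u\,du-A_t\int_0^hB_s\,ds$, which is $C^1$ in $t$;
\item or observe that $t\mapsto\begin{pmatrix}A_t&B_t\\0&1\end{pmatrix}$ is a continuous matrix semigroup, hence equals $\exp\Bigl(t\begin{pmatrix}E&b\\0&0\end{pmatrix}\Bigr)$, which gives the integral formula at once.
\end{itemize}

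In (c) there are two slips, neither fatal.
\begin{itemize}
\item The derivative at $t=0$ of $\langle(\exp(tE)-I_d)F,\exp(tE)\zeta\rangle$ is just $\langle EF,\zeta\rangle$. The term containing $E\exp(tE)\zeta$ is paired with $(\exp(tE)-I_d)F$, which vanishes at $t=0$. So your first attempt is not trivial: with $E^*\zeta=-E\zeta$ it gives $\langle F,E\zeta\rangle=0$ immediately, which is exactly the paper's argument.
\item Taking adjoints gives $\langle(\exp(tE)-I_d)F,\exp(tE)\zeta\rangle=\langle F,\zeta-\exp(tE)\zeta\rangle$, not $\langle F,\zeta-\exp(tE^*)\zeta\rangle$. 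Your forward conclusion and your converse both survive with the corrected formula. Indeed, $\zeta-\exp(tE)\zeta=-\sum_{k\geq1}\frac{t^k}{k!}E^k\zeta\in E(K)$ by $E$-invariance of $K$, which is precisely what the paper uses in the converse.
\end{itemize}
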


\begin{proof}
Assume first that $(\varphi_t)_{t\geq 0}$ is a continuous semigroup of maps of $\mathcal S_c(\CC^d)$ and write $\varphi_t(z)=A_t z+B_t$ for all $t\geq 0.$ The semigroup property ensures  that, for all $s,t\geq 0,$ $\varphi_{s+t}=\varphi_s\circ \varphi_t=\varphi_t\circ\varphi_s,$ namely
\begin{eqnarray}
A_{t+s}&=&A_tA_s=A_sA_t \nonumber \\
B_{t+s}&=&A_tB_s+B_t=A_sB_t+B_s. \label{eq:sg1}
\end{eqnarray}
Moreover, since $\varphi_t(z)\to z$ uniformly on compact subsets of $\CC^d,$ $A_t\to I_d$ as $t\to 0.$ Therefore, by \cite[Theorem I.2.8]{EN06}, there exists $E\in\mathcal M_d(\mathbb C)$ such that $A_{t}=\exp(tE)$ for all $t\geq0$.
By \cite[Lemma 2.3]{BT}, for all $t\geq 0,$ there exists $F_t\in\CC^d$ such that
$$B_t=(\exp(tE)-I_d)F_t.$$
We claim that we may choose the family $(F_t)$ such that there exists $F\in\CC^d$ with $F_t=F$ for all $t\geq 0.$
Let $E=PSP^*$ be a Schur decomposition of $E,$ namely $P$ is unitary and $S$ is an upper-triangular matrix. Denote by $\lambda_1,\dots,\lambda_d$ its diagonal coefficients. Then for $t>0,$ $\exp(tE)=P\exp(tS)P^*$ is a Schur decomposition of $\exp(tE)$ which has norm less than or equal to $1.$ Therefore the diagonal entries, $e^{t\lambda_i}$, have modulus less than or equal to $1.$ We may assume that in the first $p$ entries we find all the coefficients having modulus strictly smaller than $1.$ Then since $\|\exp(tE)\|\leq 1,$ arguing as in the proof of \cite[Proposition 2.2]{BT}, we find that
$$\exp(tE)=P\begin{pmatrix}T_t&0\\0&U_t\end{pmatrix}P^*$$
where $T_t$ is an upper-triangular matrix whose diagonal coefficients have modulus strictly less than $1$
and $U_t$ is a unitary diagonal matrix. Since any $\xi$ in $P\vect(e_{p+1},\dots,e_d)$ satisfies {$|\exp(tE)\xi|=|\xi|,$}
we find that $B_t$ is orthogonal to $P\vect(e_{p+1},\dots,e_d)$ (this last subspace does not depend on $t$).
Therefore, if we denote by $F'_t$ the orthogonal projection of $F_t$ onto $P\vect(e_{1},\dots,e_p)$, we still have
$$B_t=(\exp(tE)-I_d)F'_t.$$
Now, \eqref{eq:sg1} ensures that, for all $s,t>0,$
$$(\exp(sE)-I_d)(\exp(tE)-I_d)F'_t=(\exp(sE)-I_d)(\exp(tE)-I_d)F'_s.$$
Since $\exp(sE)-I_d$ and $\exp(tE)-I_d$ are invertible on $P\vect(e_{1},\dots,e_p)$, we get $F'_t=F'_s$ which shows the claim.

By Theorem \ref{bound-com},  $(\exp(tE))_{t\geq 0}$ is a contractive strongly continuous semigroup acting on $\CC^d.$ Therefore the Lumer-Phillips theorem implies that $\sigma(E+E^*)\subset (-\infty,0]$.

Let finally $\xi\in\CC^d$ belonging to $\bigcap_{n\geq 0}\ker((E+E^*)E^n),$ namely $E^*E^n\xi=- E^{n+1}\xi$ for all $n\geq 0$. Then for all $t\geq 0,$
{$|\exp(tE)\xi|=|\xi|$ so that,} by Theorem \ref{bound-com},
$$\langle \exp(tE)\xi,(\exp(tE)-I_d)F\rangle=0.$$
Taking the first term of the development, we get $\langle \xi,EF\rangle=0$ which yields $\langle F,E\xi\rangle=0$ since $E^*\xi=-E\xi.$

Conversely, if $(\varphi_t)_{t\geq 0}$ is a family of holomorphic self-maps of $\CC^d$ satisfying (a), (b) and (c), then
$(\varphi_t)_{t\geq 0}$ is clearly a continuous semigroup. We just need to verify that each $\varphi_t,$ $t\geq 0,$
belongs to $\mathcal S_c(\CC^d)$. But $\|\exp(tE)\|\leq 1$ by the Lumer-Phillips theorem and if $\xi\in\CC^d$ is such that {$|\exp(tE)\xi|=|\xi|,$} then
\begin{align*}
\langle \exp(tE)\xi,(\exp(tE)-I_d)F\rangle&=\langle \exp(tE^*)\exp(tE)\xi,F\rangle-\langle \exp(tE)\xi,F\rangle\\
&=\langle \xi,F\rangle-\langle \exp(tE)\xi,F\rangle\ \textrm{ by Lemma \ref{lem:sg2}}.
\end{align*}
Now observe that for $k\geq 1,$ $E^k\xi$ belongs to $E\bigcap_{n\geq 0}\ker((E+E^*)E^n)$. Therefore,
$\langle \exp(tE)\xi,F\rangle=\langle \xi,F\rangle$ and
$$\langle \exp(tE)\xi,(\exp(tE)-I_d)F\rangle=0$$
which shows that Condition (a) of Theorem \ref{bound-com} is satisfied.

Finally, the infinitesimal generator of $(\varphi_t)_{t\geq 0}$ is the map $G:\CC^d\to\CC^d$ such that, for all $z\in\CC^d,$
\begin{align*}
G(z)&=\lim_{t\to 0^+}\frac{\varphi_t(z)-\varphi_0(z)}{t}\\
&=\lim_{t\to 0^+}\frac{(\exp(tE)-I_d)(z+F)}{t}\\
&=E(z+F).
\end{align*}
\end{proof}

\begin{remark}
Using Lemma \ref{lem:sg2}, condition (c) admits several equivalent formulations. We choose the one that does not involve the variable $t.$
\end{remark}

The one-variable version of this result reads as follows.

\begin{corollary}
Let $(\varphi_t)_{t\geq 0}$ be a family of holomorphic self-maps of $\CC.$
Then $(\varphi_t)_{t\geq 0}$ is a continuous semigroup of maps belonging to $\mathcal S_c(\CC)$ if and only if there exist $a\in\mathbb C_-$ and $b\in\mathbb C$ with $b=0$ if $a\in i\mathbb R$ such that, for all $t\geq 0,$
$$\varphi_t(z)=\exp(ta)z+(\exp(ta)-1)b.$$
\end{corollary}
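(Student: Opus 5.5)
This is the case $d=1$ of Theorem \ref{thm:continuoussemigroup}, so the plan is to specialize conditions (a), (b), (c) there with $E=a\in\CC$ and $F=b\in\CC$. Condition (a) becomes exactly $\varphi_t(z)=\exp(ta)z+(\exp(ta)-1)b$. Condition (b) reads $\sigma(a+\bar a)=\{2\Re e\, a\}\subset(-\infty,0]$, that is, $\Re e\, a\leq 0$. I read $\mathbb C_-$ as the closed left half-plane $\{\Re e\, a\le 0\}$; this is consistent with the clause ``$a\in i\RR$'' in the statement.

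It remains to translate condition (c). We have $\ker((a+\bar a)a^n)=\CC$ if $2\Re e(a)\,a^n=0$, and $\{0\}$ otherwise. I split into two cases.

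\emph{Case $\Re e\, a<0$.} Then $a\neq 0$, so $2\Re e(a)\,a^0=2\Re e\, a\neq 0$. The intersection over $n\geq 0$ is therefore $\{0\}$, and (c) holds for every $b$.

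\emph{Case $\Re e\, a=0$, i.e.\ $a\in i\RR$.} Every kernel equals $\CC$, so (c) says $b\perp a\CC$, i.e.\ $\bar a b=0$. If $a\neq 0$ this forces $b=0$. If $a=0$, then $\varphi_t(z)=z$ whatever $b$ is, so we may replace $b$ by $0$ without changing the family.

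Combining the two cases, the conditions of Theorem \ref{thm:continuoussemigroup} are equivalent to: $a\in\mathbb C_-$, and $b$ can be chosen equal to $0$ when $a\in i\RR$. This gives both directions of the corollary at once.

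The only delicate point is the degenerate case $a=0$, where $b$ is not uniquely determined by the semigroup. It must be handled through the freedom in choosing $b$, so that the phrase ``$b=0$ if $a\in i\RR$'' remains compatible with the theorem, where $F$ is arbitrary when $E=0$.
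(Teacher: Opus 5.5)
Your proposal is correct and follows exactly the paper's route: the paper gives no separate proof and presents the corollary as the $d=1$ case of Theorem~\ref{thm:continuoussemigroup}, and you correctly translate (b) into $\Re e\, a\le 0$ and (c) into ``$\bar a b=0$ when $\Re e\, a=0$''. Your treatment of $a=0$, where you replace $b$ by $0$ because the family does not depend on $b$, is the right way to reconcile the statement with the theorem; it is the non-uniqueness $F-F'\in\ker(E)$ noted in the remark after the corollary.
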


\begin{remark}
If $(\varphi_t)_{t\geq 0}$ writes $\varphi_t(z)=\exp(tE)z+(\exp(tE)-I_d)F$ as well as $\varphi_t(z)=\exp(tE')z+(\exp(tE')-I_d)F',$ then it is straightforward to check that $E=E'$ and $F-F'\in\ker(E).$ In particular, if $\xi\in \bigcap_{n\geq 0}\ker((E+E^*)E^n)$, then
$$\langle F',E\xi\rangle=-\langle F',E^*\xi\rangle=-\langle EF',\xi\rangle=-\langle EF,\xi\rangle=\langle F,E\xi\rangle.$$
Hence, $(E,F)$ satisfies (b) and (c) if and only if $(E',F')$ satisfies (b) and (c).
\end{remark}

\section{Strongly continuous semigroups}

We now turn to strongly continuous semigroups of composition operators on $\mathcal F(\CC^d).$

\begin{definition}
Let $\mathcal{X}$ be a Banach space and $(T_{t})_{t\geq0}$ a family of bounded operators from $\mathcal{X}$ into itself. We say that $(T_{t})_{t\geq0}$ is a \emph{semigroup} if it satisfies the following two algebraic properties:
\begin{itemize}
  \item[(i)] $T_{0}={\mathrm{Id}}_{\mathcal{X}}$, the identity map on $\mathcal{X}$;
  \item[(ii)] For every $t,s\geq0$, $T_{t}\circ T_{s}=T_{t+s}$;
\end{itemize}
If, in addition, it satisfies that
\begin{itemize}
  \item[(iii)] $\lim_{t\rightarrow0^{+}}T_{t}f=f$ for all $f\in \mathcal{X}$,
\end{itemize}
we say that it is a \emph{strongly continuous semigroup} (also known as $C_{0}$-semigroup).
\end{definition}

Moreover, the \emph{infinitesimal generator} of an operator semigroup
$\left(T_t\right)_{t \geq 0}$ on a Banach space $\mathcal{X}$ is defined as
\begin{equation}\label{t.1b1}
\Delta (f)=\lim _{t \rightarrow 0^{+}} \frac{T_t f-f}{t},
\end{equation}
where the convergence is  in the norm topology.
The set of all $f \in \mathcal{X}$ such that the limit in (\ref{t.1b1}) exists is denoted by $\mathcal{D}(\Delta)$.
If $(T_t)_{t\geq 0}$ is strongly continuous, then $\Delta$ is a closed linear operator and $\mathcal{D}(\Delta)$ is dense in $\mathcal{X}$.

\medskip

We characterize when a family $(\varphi_t)_{t\geq 0}$ generates a $C_0$-semigroup $(C_{\varphi_t})_{t\geq 0}$ on $\mathcal F(\CC^d)$. We first need the following lemma.

\begin{lemma}\label{lem:uniformbound}
Let $E\in\mathcal M_d(\CC)$ and $F\in\CC^d$ and define, for $t\geq 0,$ $\varphi_t(z)=\exp(tE)z+(\exp(tE)-I_d)F\in\mathcal S_c(\CC^d).$ Let $t_0>0.$ Then
$$\sup_{t\in[0,t_0]}\|C_{\varphi_t}\|<\infty\iff \sup_{\substack{w\in\CC^d\\ t\in[0,t_0]}}(|\varphi_t(w)|^2-|w|^2)<\infty.$$
\end{lemma}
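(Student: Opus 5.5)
The plan is to use the norm formula \eqref{norm} in its variational form. For $\varphi(z)=Az+B\in\sccd$, consider
$$h(w)=|Aw+B|^2-|w|^2=\langle (A^*A-I_d)w,w\rangle+2\Re e\langle Aw,B\rangle+|B|^2 .$$
This is a real quadratic function whose quadratic part is $\le 0$, because $\|A\|\le 1$. The compatibility condition of Theorem \ref{bound-com}(a) makes the linear term vanish on $\ker(I_d-A^*A)$, the set of $\zeta$ with $|A\zeta|=|\zeta|$. Hence $h$ is bounded above. The next step is to check that its supremum is attained exactly at the critical points, i.e.\ at the solutions of $(I_d-A^*A)z=A^*B$.

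At such a critical point $z_0$ one has $\langle (I_d-A^*A)z_0,z_0\rangle=\langle A^*B,z_0\rangle$, which is real. Substituting gives
$$h(z_0)=-\langle A^*B,z_0\rangle+2\langle Az_0,B\rangle+|B|^2=|B|^2+\langle Az_0,B\rangle .$$
Also $|z_0|^2-|Az_0|^2=\langle (I_d-A^*A)z_0,z_0\rangle=\langle B,Az_0\rangle$. Therefore
$$|z_0|^2-|Az_0|^2+|B|^2=\sup_{w\in\CC^d}h(w).$$
With \eqref{norm} this yields the key identity
$$\|C_\varphi\|=\exp\Bigl(\tfrac14\sup_{w\in\CC^d}\bigl(|\varphi(w)|^2-|w|^2\bigr)\Bigr).$$
As a sanity check, the reproducing kernels $K_w(z)=e^{\langle z,w\rangle/2}$ satisfy $C_\varphi^*K_w=K_{\varphi(w)}$ and $\|K_w\|^2=e^{|w|^2/2}$. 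So $\|C_\varphi\|^2\ge e^{(|\varphi(w)|^2-|w|^2)/2}$, which already proves the lower bound independently of \eqref{norm}.

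Apply this identity to each $\varphi_t$, $t\in[0,t_0]$; each $\varphi_t$ lies in $\sccd$ by hypothesis. Then $\sup_{t\in[0,t_0]}\|C_{\varphi_t}\|$ equals
$$\exp\Bigl(\tfrac14\sup_{t\in[0,t_0],\,w\in\CC^d}\bigl(|\varphi_t(w)|^2-|w|^2\bigr)\Bigr),$$
with the convention that both sides are infinite simultaneously. Since $\exp$ is increasing, finiteness of one side is equivalent to finiteness of the other, and the lemma follows.

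The main obstacle is the justification that the supremum of $h$ is attained and equals its value at a critical point. This needs care when $I_d-A^*A$ is singular. Decompose $\CC^d=\ker(I_d-A^*A)\oplus\mathrm{ran}(I_d-A^*A)$, which is orthogonal since the operator is hermitian. On the kernel, $h$ is constant in that direction, because the linear term $\Re e\langle \zeta,A^*B\rangle=\Re e\langle A\zeta,B\rangle$ vanishes there by Theorem \ref{bound-com}(a). On the range, $h$ is strictly concave, hence attains its maximum. This also shows that $A^*B\in\ker(I_d-A^*A)^\perp=\mathrm{ran}(I_d-A^*A)$, so the critical equation is solvable, consistent with \eqref{norm}.
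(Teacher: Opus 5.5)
Your proof is correct, but for the nontrivial implication it takes a different route from the paper.

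For ``$\Rightarrow$'' the paper argues exactly as in your sanity check: $C_{\varphi_t}^*k_w=k_{\varphi_t(w)}$ and $\|k_w\|=e^{|w|^2/4}$. For ``$\Leftarrow$'' it does not use \eqref{norm} at all. Since $\exp(tE)$ is invertible, it changes variables $z=\varphi_t(w)$ to get
\[
(2\pi)^d\|C_{\varphi_t}f\|^2=|\det\exp(-tE)|^2\int_{\CC^d}|f(z)|^2e^{-|\varphi_t^{-1}(z)|^2/2}\,dA(z).
\]
It then concludes from $-|\varphi_t^{-1}(z)|^2\le -|z|^2+C$ and the boundedness of $|\det\exp(-tE)|$ on $[0,t_0]$.

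You instead complete the square in the concave quadratic $h$ and combine it with the Carswell--MacCluer--Schuster formula \eqref{norm}. This gives the exact identity $\|C_\varphi\|=\exp\bigl(\tfrac14\sup_w(|\varphi(w)|^2-|w|^2)\bigr)$ for every $\varphi\in\sccd$, i.e.\ the norm is attained on normalized reproducing kernels. Your treatment of the singular case is also sound: the linear term vanishes on $\ker(I_d-A^*A)$ by Theorem \ref{bound-com}(a), hence $A^*B\in\mathrm{ran}(I_d-A^*A)$ and the critical equation is solvable.

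What your route buys is a sharper and more general statement. You get an equality rather than an upper bound carrying a Jacobian factor. It holds for any family in $\sccd$, with no need for the linear parts to be invertible or for the exponential dependence on $t$.

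What the paper's route buys is self-containedness. It needs only an elementary change of variables, not the explicit norm formula \eqref{norm} or the solvability of $(I_d-A^*A)z=A^*B$. It also proves boundedness of $C_{\varphi_t}$ directly, without invoking the a priori hypothesis $\varphi_t\in\sccd$.
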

\begin{proof}
For $w\in\CC^d,$ let $k_w$ be the reproducing kernel at $w,$ which satisfies $C_\varphi^*(k_w)=k_{\varphi(w)}$ and
$\|k_w\|=\exp(|w|^2/4).$ If $\sup_{t\in[0,t_0]}\|C_{\varphi_t}\|<\infty,$ then
$$\sup_{\substack{w\neq 0\\ t\in[0,t_0]}}\frac{\|C_{\varphi_t}^*(k_w)\|}{\|k_w\|}<\infty,$$
which implies
$$ \sup_{\substack{w\in\CC^d\\ t\in[0,t_0]}}(|\varphi_t(w)|^2-|w|^2)<\infty.$$
Conversely, let us assume that  $\sup_{\substack{w\in\CC^d\\ t\in[0,t_0]}}(|\varphi_t(w)|^2-|w|^2)<\infty.$
Observe that $\varphi_t$ is a bijection of $\CC^d$ and that the assumption means that there exists $C>0$ such that
$$-|\varphi_t^{-1}(z)|^2\leq -|z|^2+C$$ for all $t\in[0,t_0]$ and all $z\in\CC^d.$
Let $f\in\mathcal F(\CC^d)$ and $t\in[0,t_0]$.  Then
\begin{align*}
(2\pi)^d\|C_{\varphi_t}(f)\|^2&=\int_{\CC^d} |f\circ\varphi_t(w)|^2 e^{-|w|^2/2}dA(w)\\
&=|\det(\exp(-tE))|^2\int_{\CC^d} |f(z)|^2 e^{-|\varphi_t^{-1}(z)|^2/2}dA(z).
\end{align*}
Hence, $\sup_{t\in[0,t_0]}\|C_{\varphi_t}\|<\infty.$
\end{proof}

We also need that a strongly continuous semigroup must be generated by a continuous semigroup of symbols.

\begin{lemma}\label{lem:c0semitosemi}
Let $(C_{\varphi_t})_{t\geq 0}$ be a strongly continuous semigroup on $\mathcal F(\CC^d)$. Then $(\varphi_t)_{t\geq 0}$ is
a continuous semigroup of $\mathcal S_c(\CC^d).$
\end{lemma}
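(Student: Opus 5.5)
We have to check the three axioms of a continuous semigroup for $(\varphi_t)_{t\geq 0}$. Since each $C_{\varphi_t}$ is a bounded operator on $\fcd$, Theorem \ref{bound-com} already tells us that each $\varphi_t$ belongs to $\mathcal S_c(\CC^d)$, say $\varphi_t(z)=A_tz+B_t$. The idea is to test the operator identities on the coordinate functions $z_j$ and on the constant function $1$, all of which lie in $\fcd$. Composition operators act faithfully on these polynomials, so information about $(C_{\varphi_t})$ turns into information about $(\varphi_t)$.

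\emph{Algebraic axioms.} From $C_{\varphi_0}=\mathrm{Id}$ we get $z_j\circ\varphi_0=z_j$ for every $j$, that is, $\varphi_0(z)=z$. From $C_{\varphi_{t+s}}=C_{\varphi_t}\circ C_{\varphi_s}$ applied to $z_j$ we get
$$z_j\circ\varphi_{t+s}=(z_j\circ\varphi_s)\circ\varphi_t .$$
Letting $j$ run over $1,\dots,d$, this says $\varphi_{t+s}=\varphi_s\circ\varphi_t$. Note that this is the order reversed, because $C_\varphi C_\psi=C_{\psi\circ\varphi}$. Since $t$ and $s$ play symmetric roles, the order does not matter and we obtain $\varphi_{t+s}=\varphi_t\circ\varphi_s$.

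\emph{Continuity.} Strong continuity gives $\|z_j\circ\varphi_t-z_j\|\to 0$ as $t\to0^+$. Point evaluations are continuous on $\fcd$, with $|f(w)|\leq \|f\|\,\|k_w\|=\|f\|e^{|w|^2/4}$. On a compact set $K$ the factor $e^{|w|^2/4}$ is bounded, so norm convergence forces uniform convergence on $K$. Hence $(\varphi_t)_j(w)=z_j\circ\varphi_t(w)\to w_j$ uniformly on $K$ for each $j$, and therefore $\varphi_t\to\varphi_0$ uniformly on compact subsets of $\CC^d$.

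\emph{Coefficients (optional).} One can also read off the coefficients directly. Because $z_j\circ\varphi_t=\sum_k (A_t)_{jk}z_k+(B_t)_j$ and the monomials $1,z_1,\dots,z_d$ are orthogonal in $\fcd$, norm convergence gives $A_t\to I_d$ and $B_t\to 0$.

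There is no real obstacle here. The only points needing care are the reversed order in the composition identity, which symmetry takes care of, and the passage from norm convergence to locally uniform convergence via the reproducing kernel bound.
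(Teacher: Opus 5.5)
Your proposal is correct and follows the paper's own argument. You test the semigroup identities on the coordinate functions $z_k$ to get $\varphi_0=\mathrm{id}$ and $\varphi_{t+s}=\varphi_t\circ\varphi_s$, and you pass from $C_{\varphi_t}z_k\to z_k$ in norm to locally uniform convergence. Along the way you spell out points the paper leaves implicit: membership in $\mathcal S_c(\CC^d)$ via Theorem \ref{bound-com}, the reversed order of composition, and the kernel bound $|f(w)|\le\|f\|e^{|w|^2/4}$.
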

\begin{proof}
Since $z_k\in\mathcal F(\CC^d)$ for all $k=1,\dots,d,$ the semigroup property of $(C_{\varphi_t})_{t\geq 0}$
easily implies that $(\varphi_t)_{t\geq 0}$ is itself a semigroup with $\varphi_0(z)=z$ for all $z\in\CC^d.$
Moreover $C_{\varphi_t}(z_k)\to z_k$ in $\mathcal F(\CC^d)$ as $t\to 0$ which implies $(\varphi_t(z))_k\to z_k$ uniformly on compact subsets of $\CC^d.$ Therefore $\varphi_t\to \varphi_0$ uniformly on compact subsets of $\CC^d.$
\end{proof}

We are now ready to state our main theorem.

\begin{theorem}\label{thm:c0}
Let $(\varphi_t)_{t\geq 0}$ be a family of holomorphic self-maps of $\CC^d$. Then $(C_{\varphi_t})_{t\geq 0}$ is a strongly continuous semigroup on $\mathcal F(\CC^d)$ if and only if there exist $E\in\mathcal M_d(\CC)$ and $F\in \CC^d$, such that
\begin{enumerate}[(a)]
\item for all $t\geq 0,$ $\varphi_t(z)=\exp(tE)z+(\exp(tE)-I_d)F.$
\item $\sigma(E+E^*)\subset (-\infty,0].$
\item $F\perp E(\ker(E+E^*)).$
\end{enumerate}
\end{theorem}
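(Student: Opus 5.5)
We prove the two implications separately. Throughout we use the identity, for $u=w+F$,
\begin{equation*}
|\varphi_t(w)|^2-|w|^2=\bigl(|e^{tE}u|^2-|u|^2\bigr)-2\Re e\langle (e^{tE}-I_d)u,F\rangle ,
\end{equation*}
which follows from $\varphi_t(w)+F=e^{tE}u$ by expanding both squares.

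\textbf{Sufficiency.} Assume (a), (b), (c). Condition (c) implies condition (c) of Theorem \ref{thm:continuoussemigroup}, because $\bigcap_{n\ge0}\ker((E+E^*)E^n)\subset\ker(E+E^*)$. Hence $(\varphi_t)$ is a continuous semigroup of $\mathcal S_c(\CC^d)$, and $(C_{\varphi_t})$ is an operator semigroup.

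The first step is to show $\sup_{t\le t_0}\|C_{\varphi_t}\|<\infty$, using Lemma \ref{lem:uniformbound}. Set $M=(-(E+E^*))^{1/2}\ge0$. Then
\begin{equation*}
|e^{tE}u|^2-|u|^2=-\int_0^t|Me^{sE}u|^2\,ds .
\end{equation*}
Condition (c) says $E^*F\perp\ker(E+E^*)=\ker M$. So $E^*F$ lies in $\mathrm{ran}\,M$, and we may write $E^*F=MG$ for some $G\in\CC^d$. Consequently $\langle Ev,F\rangle=\langle Mv,G\rangle$ for every $v$, and
\begin{equation*}
-2\Re e\langle (e^{tE}-I_d)u,F\rangle=-2\Re e\int_0^t\langle Me^{sE}u,G\rangle\,ds\le 2|G|\int_0^t|Me^{sE}u|\,ds .
\end{equation*}
Adding the two pieces gives
\begin{equation*}
|\varphi_t(w)|^2-|w|^2\le\int_0^t\bigl(-x_s^2+2|G|x_s\bigr)\,ds\le t|G|^2,\qquad x_s=|Me^{sE}u| .
\end{equation*}
This bound is uniform in $w$ and $t\le t_0$, so Lemma \ref{lem:uniformbound} gives uniform boundedness.

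Strong continuity then reduces to a dense subset, and we take the polynomials. For a polynomial $p$, $p\circ\varphi_t-p$ is a polynomial of bounded degree whose coefficients tend to $0$ as $t\to0^+$, since $e^{tE}\to I_d$. Hence $C_{\varphi_t}p\to p$ in $\fcd$.

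\textbf{Necessity.} Assume $(C_{\varphi_t})$ is a $C_0$-semigroup. By Lemma \ref{lem:c0semitosemi} and Theorem \ref{thm:continuoussemigroup}, we obtain (a) and (b). By the uniform boundedness principle, $\sup_{t\le t_0}\|C_{\varphi_t}\|<\infty$, and Lemma \ref{lem:uniformbound} then gives
\begin{equation*}
\sup_{w,\;t\le t_0}\bigl(|\varphi_t(w)|^2-|w|^2\bigr)<\infty .
\end{equation*}

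Suppose, for contradiction, that $\xi\in\ker(E+E^*)$ satisfies $\langle E\xi,F\rangle\neq0$. Take $u=s\xi$ with $s\in\CC$. The function $h(t)=|e^{tE}\xi|^2$ has $h'(0)=\langle (E+E^*)\xi,\xi\rangle=0$. Its second derivative is
\begin{equation*}
h''(0)=2\Re e\langle E^2\xi,\xi\rangle+2|E\xi|^2=2\Re e\langle E\xi,E^*\xi\rangle+2|E\xi|^2=0,
\end{equation*}
using $E^*\xi=-E\xi$. Therefore $|e^{tE}u|^2-|u|^2\ge -Ct^3|s|^2$ for small $t$. The linear term equals
\begin{equation*}
-2\Re e\bigl(st\langle E\xi,F\rangle\bigr)+O(t^2|s|).
\end{equation*}
Now choose $|s|=t^{-5/4}$, with the phase of $s$ chosen so that $-\Re e(s\langle E\xi,F\rangle)=|s||\langle E\xi,F\rangle|$. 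Then
\begin{equation*}
|\varphi_t(w)|^2-|w|^2\ge 2|\langle E\xi,F\rangle|\,t^{-1/4}-Ct^{1/2}-O(t^{3/4}),
\end{equation*}
which tends to $+\infty$ as $t\to0^+$. This contradicts the supremum bound above, so (c) holds.

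The main obstacle is this necessity step, specifically the choice of scaling. For fixed $t$, the argument only recovers the weaker condition of Theorem \ref{thm:continuoussemigroup}. One must exploit that the quadratic loss vanishes to order $t^3$ on $\ker(E+E^*)$ while the linear gain is of order $t$. In the sufficiency direction, the key idea is factoring $E^*F$ through $M$.
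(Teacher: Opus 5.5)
Your proof is correct and follows essentially the paper's route. Sufficiency reduces to Lemma \ref{lem:uniformbound} and absorbs the linear term using that $E^*F$ (equivalently $EF$) lies in the range of $-(E+E^*)$; you factor through its square root instead of completing the square. Necessity rests on the same mechanism: the quadratic part is $O(t^3)$ on $\ker(E+E^*)$ while the linear gain is of order $t$.

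The differences are cosmetic. You choose the explicit scaling $|s|=t^{-5/4}$ instead of computing the exact supremum over the line, which spares you from checking $|e^{tE}v|<1$. You also obtain strong continuity from density of polynomials rather than from weak continuity on reproducing kernels.
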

\begin{remark}
As in Theorem \ref{thm:continuoussemigroup}, condition (c) may be rephrased as $EF\perp \ker(E+E^*)$ or as $EF\in\textrm{Ran}(E+E^*).$
\end{remark}
\begin{proof}
Assume first that $(C_{\varphi_t})_{t\geq 0}$ defines a strongly continuous semigroup on $\mathcal F(\CC^d)$. Then we already know that (a) and (b) are satisfied. Assume that (c) does not hold. Let $v\in \ker(E+E^*),$ $|v|=1,$
such that $\langle EF,v\rangle\neq 0.$ For $t>0,$ set $B_t=(\exp(tE)-I_d)F.$ For $w\in\CC^d,$
$$|\varphi_t(w)|^2-|w|^2=|\exp(tE)w|^2-|w|^2+2\Re e\langle\exp(tE)w,B_t\rangle+|B_t|^2.$$
We restrict ourselves to $w=\lambda v$ for $\lambda\in\CC,$ so that
$$|\varphi_t(\lambda v)|^2-|\lambda v|^2=(|\exp(tE)v|^2-1)|\lambda|^2+2\Re e(\lambda \langle\exp(tE)v,B_t\rangle)+|B_t|^2.$$
We first observe that $|\exp(tE)v|<1$: if $|\exp(tE)v|=1,$ then $v\in\bigcap_{n\geq 0}\ker((E+E^*)E^n)$ by Lemma \ref{lem:sg2} and in particular $Ev=-E^* v.$ Therefore,
$$\langle EF,v\rangle=-\langle F,Ev\rangle=0$$
by Theorem \ref{thm:continuoussemigroup}, a contradiction.
We therefore deduce that
$$\sup_{\lambda\in\CC} (|\varphi_t(\lambda v)|^2-|\lambda v|^2)=\frac{|\langle \exp(tE)v,B_t\rangle|^2}{1-|\exp(tE)v|^2}
+|B_t|^2,$$
the maximum being attained for
$$\lambda=\frac{\overline{\langle \exp(tE)v,B_t\rangle}}{1-|\exp(tE)v|^2}.$$
We observe that $|B_t|^2=O(1)$ as $t\to 0$ whereas
\begin{align*}
|\langle \exp(tE)v,B_t\rangle|^2&=|\langle \exp(tE)v,(\exp(tE)-I_d)F\rangle|^2\\
&=t^2 |\langle v,EF\rangle|^2+o(t^2).
\end{align*}
It remains to study $1-|\exp(tE)v|^2.$ We set $H=-(E+E^*)\geq 0$ and $\psi(t)=|\exp(tE)v|^2.$ Then
\begin{align*}
\psi'(t)&=\langle E\exp(tE)v,\exp(tE)v\rangle+\langle \exp(tE)v,E\exp(tE)v\rangle\\
&=-\langle H\exp(tE)v,\exp(tE)v\rangle\\
&=-|H^{1/2}\exp(tE)v|^2
\end{align*}
so that
$$1-|\exp(tE)v|^2=\int_0^t |H^{1/2} \exp(sE)v|^2ds.$$
Now,
\begin{align*}
\exp(sE)v&=v+sEv+O(s^2)\\
H^{1/2}\exp(sE)v&=sH^{1/2}Ev+O(s^2)
\end{align*}
since $v\in \ker(H).$ Integrating we get
$$1-|\exp(tE)v|^2=O(t^3).$$
We deduce that
$$ \sup_{\substack{w\in\CC^d\\ t\in[0,1]}}(|\varphi_t(w)|^2-|w|^2)=+\infty,$$
hence $\sup_{t\in[0,1]}\|C_{\varphi_t}\|=+\infty.$ This contradicts that $(C_{\varphi_t})$ is a strongly
continuous semigroup on $\mathcal F(\CC^d)$ (see \cite[Prop I.1.4.]{EN06}).

Conversely, assume that $\varphi_t(z)=\exp(tE)z+(\exp(tE)-I_d)F$ with $E$ and $F$ satisfying (b) and (c). We observe that, by Theorem \ref{thm:continuoussemigroup} (condition (c) of that theorem being implied by condition (c) of Theorem \ref{thm:c0}), each $\varphi_t$ belongs to $\mathcal S_c(\CC^d)$. We first show that $\sup_{t\in[0,1]}\|C_{\varphi_t}\|^2<\infty.$ Let $u\in\CC^d$ be such that $EF=-(E+E^*)u.$ We write
$$\varphi_t(w)=\exp(tE)(w+F)-F$$
so that, for a fixed $w\in\CC^d,$
\begin{align*}
\frac{d}{dt}\varphi_t(w)&=E\exp(tE)(w+F)\\
&=E(\varphi_t(w)+F).
\end{align*}
We deduce that
\begin{align*}
\frac{d}{dt}|\varphi_t(w)|^2&=\langle E(\varphi_t(w)+F),\varphi_t(w)\rangle+\langle \varphi_t(w),E(\varphi_t(w)+F)\rangle\\
&=-\langle H\varphi_t(w),\varphi_t(w)\rangle+2\Re e\langle EF,\varphi_t(w)\rangle\\
&=-\langle H\varphi_t(w),\varphi_t(w)\rangle+2\Re e\langle Hu,\varphi_t(w)\rangle\\
&=-\langle H(\varphi_t(w)-u),\varphi_t(w)-u\rangle+\langle Hu,u\rangle\\
&\leq \langle Hu,u\rangle
\end{align*}
since $H\geq 0.$ Integrating, we find for all $t\in[0,1]$ and all $w\in\CC^d,$
$$|\varphi_t(w)|^2-|w|^2\leq \langle Hu,u\rangle.$$
Applying Lemma \ref{lem:uniformbound}, we have shown that $\sup_{t\in[0,1]}\|C_{\varphi_t}\|^2<\infty$.

To prove that $(C_{\varphi_t})_{t\geq 0}$ is strongly continuous, we just need by \cite[Theorem I.1.6]{EN06}
to prove that it is weakly continuous, namely that for all $f,g\in\fcd,$ $\langle C_{\varphi_t}f,g\rangle\to\langle f,g\rangle$ as $t\to 0.$ Since $\sup_{t\in[0,1]}\|C_{\varphi_t}\|<\infty$, we just need to do it for $g$ in a total set of $\fcd,$ for instance for all reproducing kernels. Let $w\in\CC^d.$ Then
\begin{equation*}
\langle C_{\varphi_t}f,k_w\rangle=\langle f,C_{\varphi_t}^* k_w\rangle
=f(\varphi_t(w))\xrightarrow{t\to 0^+} f(w)=\langle f,k_w\rangle.
\end{equation*}
\end{proof}

\begin{remark}
As for Theorem \ref{thm:continuoussemigroup}, if (b) and (c) are satisfied for some possible choice of $(E,F),$ then they are satisfied for all possible choices of $(E,F)$.
\end{remark}

In their classical work \cite{BP}, Berkson and Porta have shown that $(C_{\varphi_t})_{t\geq 0}$ is a strongly continuous semigroup on $H^2(\DD)$ if and only if $(\varphi_t)_{t\geq 0}$ is a continuous semigroup of holomorphic maps in the disc. We show that this does not extend to $\mathcal F(\CC^d)$ when $d\geq 2.$

\begin{example}\label{example}
Let
$$E={\begin{pmatrix} 0&i\\i&-1/2\end{pmatrix}} \textrm{ and } F=e_2.$$
For $t\geq 0,$ let $\varphi_t$ be defined {on $\CC^2$ by $\varphi_t(z)=\exp(tE)z+(\exp(tE)-I_2)F.$}
Then $(\varphi_t)_{t\geq 0}$ is a continuous semigroup of {$\mathcal S_c(\CC^2)$} whereas $(C_{\varphi_t})_{t\geq 0}$ is not strongly continuous {on $\mathcal F(\CC^2)$.}
\end{example}
\begin{proof}
We get immediately
$$E+E^*=\begin{pmatrix}0&0\\0&-1\end{pmatrix}$$
so that $\sigma(E+E^*)=\{-1,0\}\subset (-\infty,0]$ and $\ker(E+E^*)=\mathbb Ce_1.$
Since $E e_1=ie_2\notin \mathbb Ce_1,$ the space $\bigcap_{n\geq 0}\ker((E+E^*)E^n)$ is equal to $\{0\},$
so that  assumption (c) of Theorem \ref{thm:continuoussemigroup} is satisfied.

On the contrary, we observe that $e_1\in\ker(E+E^*)$ and that $Ee_1=ie_2$
is not orthogonal to $F.$
This prevents $(C_{\varphi_t})_{t\geq 0}$ from being a strongly continuous semigroup {on $\mathcal F(\CC^2).$}
\end{proof}

%

Observe that the previous matrix $E$ is diagonalizable, but that it is not normal. When the matrix $E$ is normal, such a phenomenon cannot arise.

\begin{theorem}\label{thm:c0normal}
Let $E\in\mathcal M_d(\CC)$ be normal and let $F\in\CC^d.$ For $t\geq 0,$ define $\varphi_t$ by
$$\varphi_t(z)=\exp(tE)z+(\exp(tE)-I_d)F,\ z\in\CC^d.$$
Then the following assertions are equivalent:
\begin{enumerate}[(a)]
\item $\exists t_0>0,$ $\varphi_{t}\in\mathcal S_c(\CC^d)$ for all $t\in[0,t_0]$.
\item $(\varphi_t)_{t\geq 0}$ is a continuous semigroup of $\mathcal S_c(\CC^d)$.
\item $(C_{\varphi_t})_{t\geq 0}$ is a strongly continuous semigroup on $\mathcal F(\CC^d)$.
\item  $\sigma(E+E^*)\subset (-\infty,0]$ and $F\perp E(\ker(E+E^*)).$
\end{enumerate}
\end{theorem}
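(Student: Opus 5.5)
We prove the cycle (a)$\Rightarrow$(d)$\Rightarrow$(c)$\Rightarrow$(b)$\Rightarrow$(a). The implication (c)$\Rightarrow$(b) is Lemma \ref{lem:c0semitosemi}, and (b)$\Rightarrow$(a) is trivial. For (d)$\Rightarrow$(c), we apply the converse part of Theorem \ref{thm:c0} to the pair $(E,F)$: condition (a) there holds by definition of $\varphi_t$, and conditions (b), (c) there are exactly (d). So the real content is (a)$\Rightarrow$(d), and normality of $E$ is used only there.

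For (a)$\Rightarrow$(d), assume $\varphi_t\in\mathcal S_c(\CC^d)$ for $t\in[0,t_0]$. By Theorem \ref{bound-com}, $\|\exp(tE)\|\leq 1$ for $t\in[0,t_0]$. Hence $\|\exp(tE)\|\leq1$ for all $t\geq 0$, since $\exp(tE)=\exp((t/n)E)^n$ with $t/n\le t_0$. The Lumer--Phillips theorem then gives $\sigma(E+E^*)\subset(-\infty,0]$.

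It remains to show $F\perp E(\ker(E+E^*))$. We use that $E$ is normal, so $E$ and $E^*$ commute with $H:=-(E+E^*)$. Let $\xi\in\ker(E+E^*)$. Since $H$ commutes with $E$, we get $HE^n\xi=E^nH\xi=0$ for all $n$. Thus
$$\ker(E+E^*)=\bigcap_{n\geq0}\ker((E+E^*)E^n).$$
Alternatively, one sees this from Lemma \ref{lem:sg2}(vi): $E^*\xi=-E\xi$ is exactly $\xi\in\ker(E+E^*)$. Hence Lemma \ref{lem:sg2} gives $|\exp(tE)\xi|=|\xi|$ for every $t>0$, in particular for $t\in(0,t_0]$. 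The boundedness criterion in Theorem \ref{bound-com}(a), applied to $\varphi_t$, then yields
$$\langle \exp(tE)\xi,(\exp(tE)-I_d)F\rangle=0 \qquad \text{for } t\in(0,t_0].$$
Dividing by $t$ and letting $t\to0^+$ gives $\langle \xi,EF\rangle=0$. Since $E^*\xi=-E\xi$, this becomes $\langle F,E\xi\rangle=-\langle E^*\xi,F\rangle^{-}$, more precisely $\langle E\xi,F\rangle=-\langle E^*\xi,F\rangle=-\langle \xi,EF\rangle=0$. Hence $F\perp E\xi$, which is (d). Equivalently, the coincidence of the two kernel conditions shows that condition (c) of Theorem \ref{thm:continuoussemigroup} and condition (c) of Theorem \ref{thm:c0} agree when $E$ is normal.

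The main (mild) obstacle is that hypothesis (a) only gives membership in $\mathcal S_c$ on a short interval, not the full semigroup structure. This is why the argument passes to all $t$ via $\exp(tE)=\exp((t/n)E)^n$, and otherwise works with the first-order expansion at small $t$. Everything else reduces to the identification of the two kernel spaces under normality.
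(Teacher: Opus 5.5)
Your proof is correct. The implications (d)$\Rightarrow$(c)$\Rightarrow$(b)$\Rightarrow$(a) are obtained exactly as in the paper, from Theorem \ref{thm:c0} and Lemma \ref{lem:c0semitosemi}. The difference lies in (a)$\Rightarrow$(d).

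The paper first uses \eqref{eq:unitary} to reduce to a diagonal $E=\mathrm{diag}(\lambda_1,\dots,\lambda_d)$. It then reads off $\Re e(\lambda_i)\le 0$ and $\ker(E+E^*)=\mathrm{span}(e_{p+1},\dots,e_d)=\{\xi:\ |\exp(E)\xi|=|\xi|\}$. From Theorem \ref{bound-com} it deduces $(\exp(tE)-I_d)F\in\mathrm{span}(e_1,\dots,e_p)$. Finally it picks one $t$ with $e^{t\lambda_i}\neq 1$ for the nonzero purely imaginary eigenvalues, which kills the corresponding coordinates of $F$.

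You stay coordinate-free instead. Normality enters only through the fact that $E$ commutes with $E+E^*$, which collapses $\bigcap_{n\ge0}\ker((E+E^*)E^n)$ to $\ker(E+E^*)$. You then rerun the first-order argument from the necessity part of Theorem \ref{thm:continuoussemigroup}, observing that it only needs small $t$.

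Your route makes the structure transparent. Read without the normality step, it shows that (a)$\Rightarrow$(b) holds for every $E$. Normality then serves solely to identify condition (c) of Theorem \ref{thm:continuoussemigroup} with condition (c) of Theorem \ref{thm:c0}, i.e.\ to close precisely the gap exhibited in Example \ref{example}.

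The paper's diagonal computation is more hands-on and needs less input. It only uses $\varphi_t\in\mathcal S_c(\CC^d)$ at a single well-chosen time. Your limit $t\to 0^+$, and your appeal to Lumer--Phillips after extending contractivity to all $t\ge 0$, use the whole interval $[0,t_0]$.
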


\begin{proof}
Since $E$ is normal, there exists a unitary matrix $P\in\mathcal M_d(\CC)$ such that $P^*EP=D$ is a diagonal matrix. Defining for $t\geq 0$ and $z\in\CC^d$, $\psi_t(z)=\exp(tD)z+(\exp(tD)-I_d)P^* F,$ \eqref{eq:unitary}
tells us that $C_{\varphi_t}$ is unitarily similar to $C_{\psi_t}$. Hence, we may assume that $E$ is diagonal.

Using the previous results, we know that $(d)\implies (c)\implies(b)\implies (a)$. Let us prove that $(a)\implies (d).$
We may assume that $t_0=1$ so that $\varphi_1\in\mathcal S_c(\CC^d)$. Let us write $E=\textrm{diag}(\lambda_1,\dots,\lambda_d)$. Since $\|\exp(E)\|\leq 1,$ then $\Re e(\lambda_i)\leq 0$ for $i=1,\dots,d.$ Reordering the eigenvalues if necessary, we may consider $p\leq q\in\{0,\dots,d\},$ {such that $\Re e(\lambda_i)<0$ for $i=1,\dots,p$, $\Re e(\lambda_i)=0$ for $i=p+1,\dots,d$} and $\lambda_i=0$ if and only if $i=p+1,\dots,q.$
This yields $\sigma(E+E^*)\subset (-\infty,0]$ and
\begin{equation*}
\ker(E+E^*)=\textrm{span}(e_{p+1},\dots,e_d)
{=\left\{\xi\in\CC^d:\ |\exp(E)\xi|=|\xi|\right\}.}
\end{equation*}
By Theorem \ref{bound-com}, $(\exp(tE)-I_d)F\in \textrm{span}(e_1,\dots,e_p)$ for all $t\in[0,1],$ so that, choosing $t$
with $\exp(t\lambda_i)\neq 1 $ for $i>q$,
$$F\in\textrm{span}(e_1,\dots,e_q)=\textrm{span}(e_{q+1},\dots,e_d)^\perp=(E(\ker(E+E^*)))^\perp.$$
\end{proof}

\begin{remark}
Of course this result holds for all semigroups $(\varphi_t)_{t\geq 0}$ if $d=1.$ This improves \cite[Theorem 2.3]{ChP} where the implication $(d) \implies (c)$ was not considered.
\end{remark}

\begin{corollary}
Let $\varphi(z)=Az+B\in\mathcal S_c(\CC^d)$ with $A$ invertible and normal. Then there exists a family $(\varphi_t)_{t\geq 0}$ of $\mathcal S_c(\CC^d)$ such that $(C_{\varphi_t})_{t\geq 0}$ is strongly continuous and $\varphi_1=\varphi.$
\end{corollary}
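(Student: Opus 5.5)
The plan is to reduce to the diagonal case and then take a logarithm of $A$ adapted to the spectral data, so that Theorem \ref{thm:c0normal} applies. Since $A$ is normal, choose a unitary $P$ with $P^*AP=D=\mathrm{diag}(\mu_1,\dots,\mu_d)$. By \eqref{eq:unitary}, $C_\varphi$ is unitarily similar to $C_{Dz+P^*B}$. Moreover, if $\psi_t$ is a semigroup for $(D,P^*B)$, then $P\psi_t(P^*z)$ is one for $(A,B)$, with $E$ replaced by $PE'P^*$ (still normal) and $F$ by $PF'$. So we may assume $A=D$ is diagonal.

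Since $\|D\|\le 1$ and $D$ is invertible, each $\mu_i\neq 0$ satisfies $|\mu_i|\le 1$. Reorder so that $|\mu_i|<1$ for $i\le p$ and $|\mu_i|=1$ for $i>p$. Boundedness (Theorem \ref{bound-com}(a)) forces $\langle \zeta,\ldots\rangle$-orthogonality: every $\zeta\in\vect(e_{p+1},\dots,e_d)$ satisfies $|D\zeta|=|\zeta|$, so $\langle D\zeta,B\rangle=0$, and since $D$ is invertible on that span we get $B\in\vect(e_1,\dots,e_p)$. Now pick $\lambda_i$ with $e^{\lambda_i}=\mu_i$, using principal logarithms. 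For $i\le p$ this gives $\Re e\lambda_i<0$. For $i>p$ we get $\lambda_i\in i\RR$, and we choose $\lambda_i=0$ exactly when $\mu_i=1$. Set $E=\mathrm{diag}(\lambda_1,\dots,\lambda_d)$, which is normal with $\exp(E)=D$.

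Next we need $F$ with $(\exp(E)-I_d)F=B$ and $F\perp E(\ker(E+E^*))$. Here $\ker(E+E^*)=\vect(e_{p+1},\dots,e_d)$, and $E$ of this is the span of those $e_i$ with $i>p$ and $\lambda_i\neq 0$. Define $F_i=B_i/(\mu_i-1)$ for $i\le p$, which is legitimate since $|\mu_i|<1$ gives $\mu_i\neq 1$, and $F_i=0$ for $i>p$. Then $(\exp(E)-I_d)F=B$ because $B_i=0$ for $i>p$, and $F$ is orthogonal to all $e_i$ with $i>p$. Also $\sigma(E+E^*)=\{2\Re e\lambda_i\}\subset(-\infty,0]$. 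So condition (d) of Theorem \ref{thm:c0normal} holds, hence $(C_{\varphi_t})$ is strongly continuous, and $\varphi_1(z)=Dz+B=\varphi(z)$.

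The only delicate point is choosing the logarithm so that it is genuinely diagonal in the same basis and vanishes on the eigenvalue $1$. Taking a non-diagonal logarithm, or $2\pi i$ on $\mu_i=1$, could break orthogonality in (c). Diagonal principal logarithms avoid this. The invertibility of $A$ is used exactly to make the logarithms exist.
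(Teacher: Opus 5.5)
Your proof is correct and is essentially the paper's argument: reduce to diagonal $A$ by a unitary change of variables, use Theorem \ref{bound-com} to get $B\in\vect(e_1,\dots,e_p)$, take a diagonal logarithm $E$ of $A$ with zero entries at the eigenvalue $1$, solve for $F$ supported on the first $p$ coordinates, and invoke Theorem \ref{thm:c0normal}. Your closing caution is slightly overstated: since your $F$ has $F_i=0$ for $i>p$, condition (c) would hold for any diagonal logarithm, including one with $2\pi i$ at an eigenvalue $\mu_i=1$.
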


\begin{proof}
We may assume that $A$ is diagonal, $A=\textrm{diag}(\lambda_1,\dots,\lambda_d)$. {Since $\sigma(A)\subset \overline{\DD}$,} reordering the eigenvalues if necessary,
{we may define $p\leq q\in\{0,\dots,d\}$ such that $|\lambda_k|<1$ if and only if $k\leq p$ and $\lambda_k=1$ if and only if $k=p+1,\dots,q$.} By Theorem \ref{bound-com}, we know that $B\in\textrm{span}(e_1,\dots,e_p)$.
{We set $E=\textrm{diag}(\log(\lambda_1),\dots,\log(\lambda_p),0,\dots,0,i\theta_{q+1},\dots,i\theta_d)$,} where $\log(\lambda_k)$ is any (complex) logarithm of $\lambda_k$ and $\lambda_j=e^{i\theta_j}$ for $j\geq q+1$. Then one may find $F\in\textrm{span}(e_1,\dots,e_p)$ such that $B=(\exp(E)-I_d)F$ and we may observe that $F\perp (E(\ker(E+E^*))).$ We conclude by defining $\varphi_t$ on $\CC^d$ for $t\geq 0$ by
$$\varphi_t(z)=\exp(tE)z+(\exp(tE)-I_d)F,\ z\in\CC^d,$$
and by applying the previous theorem.
\end{proof}

We end this section by computing the infinitesimal generator of the semigroups above and its spectrum.
Throughout, for a holomorphic
$f:\mathbb{C}^d\to\mathbb{C}$ and $G=(G_1,\dots,G_d):\mathbb{C}^d\to\mathbb{C}^d$,
we write
\begin{equation*}
\langle \nabla f(z),G(z)\rangle := \sum_{j=1}^{d}\partial_j f(z)\,G_j(z),
\end{equation*}
the bilinear pairing (not the hermitian one).

\medskip

We shall need two elementary lemmas. The first one is folklore; we include
its short proof for completeness. Recall that if $(H_n)_{n\ge 1}$ are Hilbert
spaces and $A_n$ is a closed operator on $H_n$, the direct sum
$A=\bigoplus_n A_n$ on $H=\bigoplus_n H_n$ is defined on the maximal domain
$D(A)=\{u=(u_n)\in H:\ u_n\in D(A_n),\ (A_nu_n)\in H\}$.

\begin{lemma}\label{lem:directsum}
For $n\ge 1$, let $(S^n_t)_{t\ge 0}$ be a strongly continuous semigroup on a
Hilbert space $H_n$, with generator $A_n$, and assume that
$\|S^n_t\|\le Me^{\omega t}$ for all $n\ge 1$ and all $t\ge 0$, with $M,\omega$
independent of $n$. Set $H=\bigoplus_{n} H_n$ and $S_t=\bigoplus_n S^n_t$.
Then $(S_t)_{t\ge0}$ is a strongly continuous semigroup on $H$ whose generator
is $A=\bigoplus_n A_n$. Moreover, for every $\mu\in\mathbb{C}$,
\begin{equation*}
  \mu\in\rho(A)\iff {\mu\in \bigcap_{n\geq1}\rho(A_n)}
  \text{ and }\ \sup_{n\ge1}\|R(\mu,A_n)\|<\infty,
\end{equation*}
and in that case $R(\mu,A)=\bigoplus_n R(\mu,A_n)$.
\end{lemma}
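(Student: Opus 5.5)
The plan is to verify directly the three parts of the statement: that $S_t$ is a bounded $C_0$-semigroup, that its generator is the maximal direct sum $A=\bigoplus_n A_n$, and the resolvent criterion. First, $\|S_t\|=\sup_n\|S^n_t\|\le Me^{\omega t}$, so each $S_t$ is bounded on $H$. The semigroup identities hold coordinatewise. For strong continuity, fix $u=(u_n)\in H$ and $\veps>0$, and choose $N$ with $\sum_{n>N}\|u_n\|^2<\veps^2$. Then
$$\|S_tu-u\|^2\le \sum_{n\le N}\|S^n_tu_n-u_n\|^2+(Me^{\omega}+1)^2\veps^2\qquad (0\le t\le 1).$$
The finite sum tends to $0$ as $t\to0^+$, which gives strong continuity.

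Next, let $\mathcal A$ be the generator of $(S_t)$; I will show $\mathcal A=A$. If $u\in D(\mathcal A)$, then projecting onto $H_n$ (a bounded operation commuting with $S_t$) shows $u_n\in D(A_n)$ and $(\mathcal Au)_n=A_nu_n$. Hence $(A_nu_n)\in H$, so $u\in D(A)$ and $\mathcal A\subset A$. For the reverse inclusion, I would avoid a direct limit argument and use resolvents instead. Fix real $\mu>\omega$. By Hille--Yosida, $\|R(\mu,A_n)\|\le M/(\mu-\omega)$ uniformly in $n$, so $R:=\bigoplus_n R(\mu,A_n)$ is bounded on $H$. It is a two-sided inverse of $\mu-A$ on the maximal domain: $R$ maps $H$ into $D(A)$ and $(\mu-A)R=I$, while $R(\mu-A)u=u$ for $u\in D(A)$. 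Since $\mu\in\rho(\mathcal A)$ and $\mathcal A\subset A$, both $\mu-\mathcal A$ and $\mu-A$ are bijective from their domains onto $H$, and the first is a restriction of the second. Therefore $D(\mathcal A)=D(A)$. This identification of the generator is the main point to handle carefully.

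For the resolvent criterion, fix any $\mu\in\CC$. Suppose first that $\mu\in\rho(A_n)$ for all $n$ and $\sup_n\|R(\mu,A_n)\|<\infty$. Then the same computation as above shows that $\bigoplus_nR(\mu,A_n)$ is a bounded inverse of $\mu-A$, so $\mu\in\rho(A)$ and $R(\mu,A)=\bigoplus_nR(\mu,A_n)$. Conversely, suppose $\mu\in\rho(A)$. The operator $\mu-A$ commutes with the inclusion $J_n:H_n\to H$ and the projection $P_n:H\to H_n$, in the sense that $J_n$ maps $D(A_n)$ into $D(A)$ and $(\mu-A)J_n=J_n(\mu-A_n)$. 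Hence $R(\mu,A)$ leaves each $J_nH_n$ invariant. The operator $P_nR(\mu,A)J_n$ is then a bounded inverse of $\mu-A_n$, so $\mu\in\rho(A_n)$ with $\|R(\mu,A_n)\|\le\|R(\mu,A)\|$. This gives the uniform bound, and the formula $R(\mu,A)=\bigoplus_n R(\mu,A_n)$ follows.
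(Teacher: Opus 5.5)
Your proposal is correct and follows essentially the same route as the paper. You get strong continuity from the uniform bound plus a tail (density) argument, $\mathcal A\subset A$ by projecting onto $H_n$, and equality of domains from a resolvent point $\mu>\omega$; there you use the Hille--Yosida bound to make $\mu-A$ bijective, whereas the paper only needs its injectivity. The resolvent criterion is handled, as in the paper, by the direct sum of resolvents and the fact that each $H_n$ reduces $A$. One harmless slip: when $\omega<0$, the constant $Me^{\omega}$ in your tail estimate should be $Me^{\max(\omega,0)}$.
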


\begin{proof}
Since $\|S_t u\|^2=\sum_n\|S^n_tu_n\|^2\le M^2e^{2\omega t}\|u\|^2$, the family
$(S_t)$ is uniformly bounded on compact time intervals; together with the
strong continuity of each $(S^n_t)$, this gives the strong continuity of $(S_t)$ on
the dense subspace of finitely supported vectors, hence on $H$. Let $A'$
denote its generator.

If $u\in D(A')$, then applying the orthogonal projection $P_n$ onto $H_n$,
{which is bounded and commutes with $S_t$,} gives
$t^{-1}(S^n_tu_n-u_n)\to (A'u)_n$; hence $u_n\in D(A_n)$, {$A_nu_n=(A'u)_n$ and
$(A_nu_n)\in H$, i.e. $u\in D(A)$ and $Au=A'u$. Thus $D(A')\subset D(A)$.}

Fix now $\lambda$ with $\Re e(\lambda)>\omega$. Then $\lambda\in\rho(A')$, so
$\lambda-A'$ maps $D(A')$ onto $H$. Moreover $\lambda-A$ is injective: if
$Au=\lambda u$ then $A_nu_n=\lambda u_n$ for every $n$, while
$\lambda\in\rho(A_n)$ because $\|S^n_t\|\le Me^{\omega t}$; hence $u=0$. Given
$u\in D(A)$, pick $w\in D(A')$ with $(\lambda-A')w=(\lambda-A)u$; {since
$D(A')\subset D(A)$} this forces $u=w\in D(A')$. Therefore $A=A'$.

Assume now $\mu\in\rho(A_n)$ for every $n$ and
$M_0:=\sup_n\|R(\mu,A_n)\|<\infty$, and put $R:=\bigoplus_nR(\mu,A_n)$, a
bounded operator of norm at most $M_0$. Given $f=(f_n)\in H$, the vectors
$u_n:=R(\mu,A_n)f_n$ satisfy $\sum_n\|u_n\|^2\le M_0^2\|f\|^2$ and
$A_nu_n=\mu u_n-f_n$, whence
\[
  \sum_n\|A_nu_n\|^2\le 2|\mu|^2M_0^2\|f\|^2+2\|f\|^2<\infty .
\]
Thus $Rf\in D(A)$, and one checks at once that $(\mu-A)R=I$ on $H$ and
$R(\mu-A)=I$ on $D(A)$, so that $\mu\in\rho(A)$.

Conversely, each $H_n$ reduces $A$, so $R(\mu,A)$ commutes with the orthogonal
projection onto $H_n$ and its restriction to $H_n$ is a bounded inverse of
$\mu-A_n$ of norm at most $\|R(\mu,A)\|$.
\end{proof}

\begin{lemma}\label{lem:theta}
Let $\theta_{p+1},\dots,\theta_d\in\mathbb{R}$ and set
\[
  \Theta_+:=\Big\{\textstyle\sum_{k>p}\alpha_k\theta_k:\ \alpha_k\in\ZZ_+\Big\},
  \qquad
  \Theta:=\textstyle\sum_{k>p}\theta_k\mathbb{Z}.
\]
Then $\overline{\Theta_+}=\mathbb{R}$ if and only if $\Theta$ is dense in 
$\mathbb{R}$ and there exist $j\ne k$ with $\theta_j\theta_k<0$. Otherwise
$\Theta_+$ is closed and locally finite.
\end{lemma}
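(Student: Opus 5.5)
The plan is to split according to the sign pattern of the $\theta_k$ and use two elementary facts. The first is that a finitely generated subgroup of $\RR$ is either dense or of the form $c\ZZ$ with $c\geq 0$. The second is that a set of the form $\{\sum\alpha_k\theta_k:\alpha_k\in\ZZ_+\}$ with all $\theta_k$ of one sign is locally finite. I will treat the degenerate cases first, then the mixed-sign case.

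\emph{One-sign case.} Suppose no two $\theta_j,\theta_k$ have opposite signs. Discarding the zero $\theta_k$, which contribute nothing, all remaining ones are (say) $>0$; let $m=\min\theta_k>0$. An element $\sum\alpha_k\theta_k\leq R$ then forces $\sum\alpha_k\leq R/m$. Hence only finitely many elements of $\Theta_+$ lie in $[-R,R]$, so $\Theta_+$ is locally finite, hence closed, and in particular $\overline{\Theta_+}\neq\RR$. The negative case is symmetric.

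\emph{Mixed signs, $\Theta$ discrete.} Suppose there exist $j\neq k$ with $\theta_j\theta_k<0$ but $\Theta$ is not dense. Then $\Theta=c\ZZ$ for some $c>0$. I claim $\Theta_+$ is then a subgroup, hence equal to $\Theta$. Indeed, $\theta_j/\theta_k\in\mathbb Q$, so there are positive integers $a,b$ with $a\theta_j+b\theta_k=0$; thus $-\theta_j=(a-1)\theta_j+b\theta_k\in\Theta_+$. For a generator $\theta_l$ of either sign, pick an opposite-sign partner; the same trick gives $-\theta_l\in\Theta_+$. So $\Theta_+=\Theta=c\ZZ$, which is closed, locally finite and not dense.

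\emph{Mixed signs, $\Theta$ dense.} This is the step requiring the most care. Assume $\Theta$ is dense and $\theta_j<0<\theta_k$. The goal is to show that $\Theta_+$ contains arbitrarily small positive elements and arbitrarily small negative elements. Once that is done, density follows: given $x\in\RR$ and $\varepsilon>0$, take $0<\delta<\varepsilon$ in $\Theta_+$ of the sign of $x$, and some multiple $n\delta$ lies within $\varepsilon$ of $x$.

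To produce the small elements, I would reduce to the rank-one subsemigroup structure. Let $\Gamma$ be the group generated by all $\theta_l$. For each generator $\theta_l$, the element $-\theta_l$ is a limit point of $\Theta_+$: for instance, if $\theta_l>0$, add multiples of $\theta_j$ and $\theta_k$. The delicate point is that $\theta_j,\theta_k$ alone may be commensurable while density comes from the other generators.

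First try: show that $\overline{\Theta_+}$ is a closed additive semigroup containing $\theta_l$ and $-\theta_l$ for every $l$. It then contains the group $\Gamma=\Theta$, hence equals $\RR$ by density. To see $-\theta_l\in\overline{\Theta_+}$, combine the pair $\theta_j<0<\theta_k$. If $\theta_j/\theta_k$ is rational, $\mathrm{span}_{\ZZ_+}(\theta_j,\theta_k)$ is the group $g\ZZ$ with $g>0$. Then $-\theta_l$ for $\theta_l>0$ is approximated by $N$ copies of $(-g)$, added to the relevant generators, and we use the group $g\ZZ$ together with $\theta_l$. Concretely, $\Theta_+\supset g\ZZ+\ZZ_+\theta_l$ for each $l$, so $\overline{\Theta_+}\supset g\ZZ+\ZZ\theta_l$ whenever that set is closed. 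If instead it is dense, then $\overline{\Theta_+}=\RR$ already.

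If $\theta_j/\theta_k$ is irrational, then $\ZZ_+\theta_j+\ZZ_+\theta_k$ is dense by the classical Kronecker argument: fractional parts of $n\theta_k/|\theta_j|$ are dense. Then $\overline{\Theta_+}=\RR$ immediately.

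In all cases, closedness of $\overline{\Theta_+}$ under addition, together with the presence of $\pm\theta_l$, yields $\Theta\subset\overline{\Theta_+}$ and hence $\overline{\Theta_+}=\RR$. This proves the equivalence, and the non-density cases established above give the final sentence of the statement.
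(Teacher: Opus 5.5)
Your argument is correct and is essentially the paper's. In both, a pair of opposite-sign elements of $\Theta_+$ is either incommensurable, so the two-generator semigroup is dense, or commensurable, so a relation $a\theta_j+b\theta_k=0$ with $a,b\geq 1$ puts negatives into $\Theta_+$; your anchoring on the group $g\ZZ\subset\Theta_+$ replaces the paper's ``exchange the roles'' step. When writing it up, justify that $g\ZZ+\ZZ_+\theta_l$ equals the group $g\ZZ+\ZZ\theta_l$ if $\theta_l/g\in\mathbb{Q}$ (same relation trick) and is dense otherwise (Kronecker applied to $\theta_l$ and whichever of $\pm g$ has the opposite sign), since closedness of $g\ZZ+\ZZ\theta_l$ is not by itself the reason, and delete exploratory sentences such as the one about ``$N$ copies of $(-g)$''.
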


\begin{proof}
If all the $\theta_k$ have the same sign, every bounded subset of $\mathbb{R}$
contains only finitely many elements of $\Theta_+$, so $\Theta_+$ is locally
finite. If $\Theta\subset c\mathbb{Z}$ for some $c>0$ the same conclusion
holds. Conversely, assume $\Theta$ dense and there exists $j\neq k$ with $\theta_j\theta_k<0$.
Fix $b\in\Theta_+,$ $b<0$.
Assume that there exists $a\in\Theta_+$, $a>0$, such that
$a/|b|\notin\mathbb{Q}.$ Then $\{ma+nb:m,n\ge1\}$ is already dense in
$\mathbb{R}$ and $\overline{\Theta_+}=\mathbb R.$ Otherwise, for all $a\in\Theta_+$, $a>0,$ $a/|b|=q/q'$ with $q,q'\in\mathbb{N}$. Hence $q'a+qb=0$ and
$(q'-1)a+qb=-a\in \Theta_+$. Therefore (we can exchange the role played by the positive and the negative elements of $\Theta_+$),  $\overline{\Theta_+}$ is a closed subgroup of $\mathbb{R}$
containing every $\theta_k$; hence it contains $\overline{\Theta}=\mathbb{R}$.
\end{proof}

\medskip

\begin{theorem}\label{thm:generator}
Let $(C_{\varphi_t})_{t\ge0}$ be a strongly continuous semigroup on
$\mathcal F(\mathbb{C}^d)$. Let $E\in \mathcal M_d(\mathbb{C})$ and $F\in\mathbb{C}^d$ be such
that $\varphi_t(z)=\exp(tE)z+(\exp(tE)-I_d)F$ for all $t\ge0$ and all
$z\in\mathbb{C}^d$. Then the infinitesimal generator of $(C_{\varphi_t})_{t\ge0}$
is
\[
  \Delta(f)(z)=\langle\nabla f(z),G(z)\rangle,\qquad z\in\mathbb{C}^d,
\]
where $G(z)=E(z+F)$.

Moreover, let $\lambda_1,\dots,\lambda_d$ be the eigenvalues of $E$, repeated
according to algebraic multiplicity and ordered so that $\Re e(\lambda_k)<0$ for
$k=1,\dots,p$ and $\lambda_k=i\theta_k$, $\theta_k\in\mathbb{R}$, for
$k=p+1,\dots,d$. Set
\[
  S:=\left\{\sum_{k=1}^{d}\alpha_k\lambda_k:\ \alpha\in\ZZ_+^d\right\}.
\]
Then $\sigma(\Delta)=\overline{S}$. More precisely:
\begin{enumerate}
\item[(a)] if $\theta_{p+1}\mathbb{Z}+\dots+\theta_d\mathbb{Z}$ is dense in
  $\mathbb{R}$ and there exist $j\ne k$ with $\theta_j\theta_k<0$, then
  \[
    \sigma(\Delta)=\bigcup_{\alpha\in\ZZ_+^p}
      \Big(\sum_{k=1}^{p}\alpha_k\lambda_k+i\mathbb{R}\Big);
  \]
\item[(b)] otherwise $S$ is closed and $\sigma(\Delta)=S$.
\end{enumerate}
\end{theorem}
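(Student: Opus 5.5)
The generator formula is the easy part, so I would do it first and spend most of the effort on the spectrum.

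\textbf{Generator.} Polynomials are dense in $\fcd$, and the homogeneous parts are left invariant in a suitable sense, which suggests a direct sum decomposition. For $f$ a polynomial, $C_{\varphi_t}f(z)=f(\exp(tE)(z+F)-F)$ is a polynomial of bounded degree, and differentiating at $t=0$ gives $\langle\nabla f(z),E(z+F)\rangle$. The convergence holds in norm because it takes place in a finite-dimensional space. After the translation $z\mapsto z+F$ (formally; this is not a bounded operator in general), the space $\mathcal P_{\le n}$ of polynomials of degree at most $n$ is invariant under every $C_{\varphi_t}$. By the core theorem \cite[Prop. II.1.7]{EN06}, the polynomials therefore form a core for $\Delta$. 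Since $\Delta$ is closed and the operator $f\mapsto\langle\nabla f,G\rangle$ on its maximal domain is closed as well (convergence in $\fcd$ implies locally uniform convergence of derivatives), the two operators agree on this core. I would still need to check that the maximal domain carries no extra functions. For that I would use that $\mu-\Delta$ is bijective for $\Re e\mu$ large, while the differential operator is injective there, since a solution of $\langle\nabla f,G\rangle=\mu f$ gives $f\circ\varphi_t=e^{\mu t}f$, contradicting the bound on $\|C_{\varphi_t}\|$.

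\textbf{Spectrum: reduction to a direct sum.} By \eqref{eq:unitary} I can conjugate unitarily and assume $E$ has the Schur-type form of Lemma \ref{simility}. Here I would adapt it to $E$: the block $T$ is upper triangular and contains the eigenvalues with negative real part, the block $U=\mathrm{diag}(i\theta_{p+1},\dots,i\theta_d)$ is diagonal (contractivity of $\exp(tE)$ forces this splitting, as in the proof of Theorem \ref{thm:continuoussemigroup}), and condition (c) together with Theorem \ref{thm:continuoussemigroup} lets me take $F\in\CC^p\times\{0\}$ up to $\ker E$. The Fock space then factors as $\mathcal F(\CC^p)\otimes\mathcal F(\CC^{d-p})$. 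The second factor is the orthogonal sum of the monomials $w^\beta$, and each $w^\beta$ is an eigenvector with $C_{\varphi_t}w^\beta=e^{it\langle\beta,\theta\rangle}w^\beta$. This yields
\[
\fcd=\bigoplus_{\beta\in\ZZ_+^{d-p}}\mathcal F(\CC^p)\otimes w^\beta,
\]
with restricted semigroup $e^{it\beta\cdot\theta}C_{\varphi^0_t}$ and generator $\Delta_0+i\beta\cdot\theta$. The bounds $\|S^\beta_t\|\le Me^{\omega t}$ hold uniformly in $\beta$, so Lemma \ref{lem:directsum} applies.

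\textbf{Spectrum of the stable part.} For the semigroup $\varphi^0_t$ on $\CC^p$, $\exp(tT)$ is a strict contraction for $t>0$, so $C_{\varphi^0_t}$ is compact by Theorem \ref{bound-com}(b). Compactness gives $\sigma(\Delta_0)$ as the set of eigenvalues via the spectral mapping theorem for eventually compact semigroups. Triangularizing on $\mathcal P_{\le n}$ (after translating by $F$), the eigenvalues are exactly the sums $\sum_{k\le p}\alpha_k\lambda_k$. The remaining point, and the one I expect to be the main obstacle, is the uniform bound $\sup_\beta\|R(\mu-i\beta\cdot\theta,\Delta_0)\|<\infty$ whenever $\mu\notin\overline S$. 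Compactness makes $\|R(\nu,\Delta_0)\|$ blow up only near the discrete set $\sigma(\Delta_0)$. So I need that this resolvent is bounded on vertical strips away from that set, and decays as $|\Im\nu|\to\infty$. I would attempt this with the integral $R(\nu,\Delta_0)=\int_0^\infty e^{-\nu t}C_{\varphi^0_t}\,dt$ applied after subtracting finitely many spectral projections: the remainder semigroup has growth bound below $\Re e\,\nu$, which gives boundedness uniform in $\Im\nu$ on each vertical line. Continuity in $\Re e\,\nu$ plus compactness of the relevant range of real parts then finishes the estimate.

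\textbf{Conclusion.} Once the uniform bound is in place, Lemma \ref{lem:directsum} gives
\[
\sigma(\Delta)=\overline{\bigcup_\beta\bigl(\sigma(\Delta_0)+i\beta\cdot\theta\bigr)}=\overline S.
\]
The dichotomy (a)/(b) then follows from Lemma \ref{lem:theta} applied to $\Theta_+$. In case (b), closedness of $S$ uses local finiteness of $\Theta_+$ together with $\Re e\lambda_k<0$ for $k\le p$.
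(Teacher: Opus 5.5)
Your proposal is correct, and its architecture is the paper's. Both use the unitary reduction to $E=E_1\oplus E_2$, $F=(F_1,0)$, and the splitting $\fcd=\bigoplus_\beta\mathcal F(\CC^p)\otimes w^\beta$ with generator $\bigoplus_\beta(\Delta_1+ic_\beta)$. Both use compactness of the stable semigroup. The resolvent bound on a vertical line is obtained the same way: split off the finitely many spectral projections with $\Re e\ge x$ and use growth bound $=$ spectral bound on the complement. The conclusion then follows from Lemmas \ref{lem:directsum} and \ref{lem:theta}.

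The differences are local:
\begin{itemize}
\item \textbf{Generator.} The paper only applies the chain rule to $f\in D(\Delta)$. Your core-plus-injectivity argument also identifies $D(\Delta)$ with the maximal domain $\{f\in\fcd:\ \langle\nabla f,G\rangle\in\fcd\}$, which is more than the paper proves.
\item \textbf{The inclusion $\sigma(\Delta_1)\subset S_1$.} The paper combines the spectral inclusion $e^{t\sigma(\Delta_1)}\subset\sigma(C_{\varphi^1_t})$, the formula of \cite{JPZ} for $\sigma(C_{\varphi^1_t})$, and a countability argument over $t$. You use compactness of the resolvent, so $\sigma(\Delta_1)=\sigma_p(\Delta_1)$, and then compute the eigenvalues by triangularizing in homogeneous degree centred at the common fixed point $-F_1$. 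Your route is self-contained and avoids \cite{JPZ}.
\item \textbf{The inclusion $S\subset\sigma(\Delta)$.} You obtain it from the direct sum. The paper instead writes down the eigenfunctions $\prod_k L_k(z+F)^{\alpha_k}$ directly on $\CC^d$.
\end{itemize}

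Two steps need to be written out.
\begin{itemize}
\item Triangularizing on $\mathcal P_{\le n}$ only controls polynomial eigenvectors; you must also handle a general eigenfunction. If $\Delta_1 f=\mu f$, then $f\circ\varphi^1_t=e^{\mu t}f$. Each homogeneous component $g_m$ of $g(u)=f(u-F_1)$ then satisfies $g_m\circ\exp(tE_1)=e^{\mu t}g_m$. A nonzero $g_m$ shows that $\mu$ is an eigenvalue of $u\mapsto\langle\nabla g_m(u),E_1u\rangle$ on degree-$m$ polynomials, hence $\mu\in S_1$.
\item The strict contraction $\|\exp(tE_1)\|<1$ does not follow from $\|\exp(tE_1)\|\le 1$ and $r(\exp(tE_1))<1$ alone. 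For instance, $\begin{pmatrix}a&1-a^2\\0&a\end{pmatrix}$ with $0<a<1$ has norm exactly $1$. As in the paper, use Lemma \ref{lem:sg2}: if $|\exp(tE_1)\xi|=|\xi|$ for some $\xi\neq0$, then $|\exp(sE_1)\xi|=|\xi|$ for all $s>0$. This contradicts $\exp(sE_1)\to0$.
\end{itemize}
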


\begin{proof}
By the chain rule, for $f\in\mathcal F(\mathbb{C}^d)$ in the domain of $\Delta$,
\[
  \Delta(f)(z)=\lim_{t\to0^+}\frac{f(\varphi_t(z))-f(z)}{t}
             =\langle\nabla f(z),E(z+F)\rangle .
\]

For $k=1,\dots,d$, since $\lambda_k$ is an eigenvalue of $E$, there exists a
nonzero linear form $L_k\in(\CC^d)^*$ such that $L_k\circ E=\lambda_k L_k$. Then
$L_k\circ\exp(tE)=e^{t\lambda_k}L_k$ for all $t\geq 0$. Setting
$f_k(z)=L_k(z+F)$, we get, for all $t\geq 0$ and all $z\in\CC^d$,
\[
  C_{\varphi_t}f_k(z)=L_k\big(\exp(tE)z+(\exp(tE)-I_d)F+F\big)
  =L_k\big(\exp(tE)(z+F)\big)=e^{t\lambda_k}f_k(z).
\]
Now let $\alpha\in\ZZ_+^d$ and set $f_\alpha=\prod_{k=1}^d f_k^{\alpha_k}$,
which is a nonzero polynomial, hence belongs to $\fcd$. Since
$C_{\varphi_t}$ is multiplicative on polynomials, we get
$C_{\varphi_t}f_\alpha=e^{ts_\alpha}f_\alpha$ with
$s_\alpha=\sum_{k=1}^d\alpha_k\lambda_k$. Therefore
\[
  \frac{C_{\varphi_t}f_\alpha-f_\alpha}{t}
  =\frac{e^{ts_\alpha}-1}{t}\,f_\alpha\xrightarrow[t\to0^+]{} s_\alpha f_\alpha
\]
in $\fcd$, so that $f_\alpha\in D(\Delta)$ and $\Delta f_\alpha=s_\alpha f_\alpha$.
Hence $S\subset\sigma_p(\Delta)\subset\sigma(\Delta)$. Since  $\Delta$ is a closed operator, it follows that $\overline{S}\subset\sigma(\Delta)$.

To prove the converse inclusion we will use a diagonalization of our operator.
We do a normalization of the symbol.
Arguing as in the proof of Theorem \ref{thm:continuoussemigroup}, there is a unitary
$P\in \mathcal M_d(\mathbb{C})$, an upper triangular matrix $E_1\in\mathcal M_p(\mathbb C)$
and a diagonal matrix $ E_2=\operatorname{diag}(i\theta_{p+1},\dots,i\theta_d)$ such that
$$E=P\begin{pmatrix}E_1&0\\0&E_2\end{pmatrix}P^*.$$ Hence, for all $t\geq 0,$
$$  \exp(tE)=P\begin{pmatrix}T_t&0\\0&U_t\end{pmatrix}P^*,$$
where $T_t\in \mathcal M_p(\mathbb{C})$ is upper triangular with diagonal entries of
modulus strictly less than $1$ and $U_t\in \mathcal M_{d-p}(\mathbb{C})$ is a unitary
diagonal matrix; moreover we may choose $F\in P(\mathbb{C}^p\times\{0\}^{d-p})$.
 By \eqref{eq:unitary},
$C_{\varphi_t}$ is unitarily equivalent to the semigroup associated with
$P^*EP$ and $P^*F$; since neither $\sigma(\Delta)$ nor $S$ is affected, we may
and do assume $P=I_d$, so that $E=E_1\oplus E_2$ and $F=(F_1,0)$ with
$F_1\in\mathbb{C}^p$.

Write $z=(z',z'')\in\mathbb{C}^p\times\mathbb{C}^{d-p}$ and
\begin{equation*}
\varphi^1_t(z')=\exp(tE_1)z'+(\exp(tE_1)-I_p)F_1,
\end{equation*}
so that $\varphi_t(z)=(\varphi^1_t(z'),\exp(tE_2)z'')$. For
$\beta\in\ZZ_+^{d-p}$ let
\begin{equation*}
\mathcal{H}_\beta:=\big\{g(z')\,(z'')^\beta:\ g\in\mathcal F(\mathbb{C}^p)\big\}.
\end{equation*}
By Fubini's theorem,
$\mathcal F(\mathbb{C}^d)=\bigoplus_{\beta}\mathcal{H}_\beta$ is an orthogonal
decomposition, and $\mathcal{H}_\beta$ is isometrically isomorphic to
$\mathcal F(\mathbb{C}^p)$ after normalising $(z'')^\beta$. Since $E_2$ is diagonal,
$\big(\exp(tE_2)z''\big)^\beta=e^{itc_\beta}(z'')^\beta$ with
\begin{equation*}
  c_\beta:=\sum_{k>p}\theta_k\beta_k\in\Theta_+,
\end{equation*}
so that, writing $T_t:=C_{\varphi^1_t}$ on $\mathcal F(\mathbb{C}^p)$,
\begin{equation*}\label{eq:decomp}
  C_{\varphi_t}\big(g\otimes(z'')^\beta\big)
    =e^{itc_\beta}\,(T_tg)\otimes(z'')^\beta .
\end{equation*}
In particular every $\mathcal{H}_\beta$ reduces $C_{\varphi_t}$, and
$(T_t)_{t\ge0}$ is a strongly continuous semigroup on $\mathcal F(\mathbb{C}^p)$ (it is
the restriction of $(C_{\varphi_t})$ to $\mathcal{H}_0$). Denoting by
$\Delta_1$ its generator and applying Lemma \ref{lem:directsum} to the
uniformly bounded family $(e^{itc_\beta}T_t)_\beta$, we obtain
\begin{equation}\label{eq:gensum}
  \Delta=\bigoplus_{\beta\in\ZZ_+^{d-p}}(\Delta_1+ic_\beta).
\end{equation}
An important point for the following is that, for all $t> 0,$ $T_t$ is compact. Indeed,
by Theorem \ref{bound-com},
we just need to verify that $\|\exp(tE_1)\|<1$. We already know that $\|\exp(tE)\|\le1$,
hence $\|\exp(tE_1)\|\le1$. If $\|\exp(tE_1)\|=1$, then Lemma \ref{lem:sg2}
ensures that $\|\exp(sE_1)\|=1$ for all $s>0$, which contradicts  $\exp(sE_1)\to0$
as $s\to\infty,$ a consequence of $\sigma(E_1)\subset\{z:\ \Re e(z)<0\}$.

\medskip

We shall compute the spectrum of $\Delta$ using Lemma \ref{lem:directsum}.
Thus we first compute the spectrum of $\Delta_1$.

Let
$$S_1=\left\{\sum_{k=1}^p \alpha_k\lambda_k:\ \alpha\in\ZZ_+^p\right\}.$$
Observe that $S_1$ is closed since $\Re e(\lambda_k)<0$ for $k=1,\dots,p$ (this is why it is easier to work
with $E_1$) and we already know (by the first part of the proof applied to $(\varphi^1_t)$ and $\Delta_1$) that $S_1\subset\sigma(\Delta_1)$.
Let us now prove the reverse inclusion. By \cite[Theorem V.2.5]{EN06}, for all $t\geq 0,$
$$e^{t\sigma(\Delta_1)}\subset\sigma(C_{\varphi^1_t}).$$
On the other hand, by a result of \cite{JPZ}, for all $t\geq 0,$
$$\sigma(C_{\varphi^1_t})=\overline{\left\{\exp\left(t\sum_{k=1}^p \alpha_k\lambda_k\right):\ {\alpha\in\ZZ_+^p}\right\}}.$$
The set $\left\{\exp\left(t\sum_{k=1}^p \alpha_k\lambda_k\right):\ {\alpha\in\ZZ_+^p}\right\}\cup\{0\}$ is closed for
all $t>0.$
Let $\mu\in\sigma(\Delta_1)$. For all $t>0,$ there exists $k(t)\in\ZZ$ and $\alpha(t)\in\ZZ_+^p$ such that
$$\mu-\sum_{k=1}^p \alpha_k(t)\lambda_k=\frac{2\pi i k(t)}{t}.$$
If $k(t)\neq 0$ for all $t>0$, then $\{2\pi i k(t)/t:\ t>0\}$ is uncountable whereas
\begin{equation*}
\left\{\mu-\sum_{k=1}^p \alpha_k\lambda_k:\ \alpha\in\ZZ_+^p\right\}
\end{equation*}
is countable, a contradiction. Therefore, for at least one $t>0,$ $k(t)=0$ so that $\mu\in S_1.$

In order to apply Lemma \ref{lem:directsum}, we still need a uniform resolvent estimate.
More precisely, we claim that for every $x\in\mathbb{R}$ and every $\delta>0$,
\begin{equation}\label{eq:unifres}
  \sup\Big\{\|R(\mu,\Delta_1)\|:\ \Re e(\mu)=x,\ \operatorname{dist}(\mu,S_1)\ge\delta\}
  <\infty .
\end{equation}

Let us fix $x\in\mathbb R$ and let us introduce
$ \Sigma_x:=S_1\cap\{z\in\mathbb C:\ \Re e(z)\ge x\}$ and observe that $\Sigma_x$ is finite.
By \cite[Proposition V.1.17]{EN06} one gets a
decomposition $\mathcal F(\mathbb{C}^p)=X\oplus Y$ into $\Delta_1$-invariant closed
subspaces such that $\Delta_1|_X$ is bounded,
$\sigma(\Delta_1|_X)=\Sigma_x$ and $\sigma(\Delta_1|_Y)=S_1\setminus\Sigma_x$.

Let $P$ be the projection given by \cite[Proposition V.1.17]{EN06}, and recall that
it is obtained there as the spectral projection of the bounded operator
$R:=R(\nu,\Delta_1)$ for some $\nu\in\rho(\Delta_1)$ fixed; in particular $P$ lies in the range of the
holomorphic functional calculus of $R$, so it commutes with every bounded
operator commuting with $R$. Now by  \cite[Lemma II.1.3]{EN06} we have
$T_t D(\Delta_1)\subset D(\Delta_1)$ and $\Delta_1T_tu=T_t\Delta_1u$ for
$u\in D(\Delta_1)$, whence $T_tR=RT_t$ for every $t\ge0$. Therefore
$PT_t=T_tP$, so that $X=\operatorname{ran}P$ and $Y=\ker P$ are
$T_t$-invariant, and $(T_t|_Y)_{t\ge0}$ is a strongly continuous
semigroup with generator $\Delta_1|_Y$.

On $X$, the operator $\Delta_1|_X$ is bounded with finite spectrum $\Sigma_x$,
so $\mu\mapsto\|R(\mu,\Delta_1|_X)\|$ is continuous on
$\mathbb{C}\setminus\Sigma_x$ and tends to $0$ as $|\mu|\to\infty$; it is
therefore bounded on the closed set
$\{\Re e(\mu)=x,\ \operatorname{dist}(\mu,\Sigma_x)\ge\delta\}$.
On $Y$, each operator $T_t|_Y$, $t>0,$ being compact, the semigroup $(T_t|_Y)_{t\geq 0}$ is eventually
norm-continuous, so \cite[Corollary V.2.9]{EN06} gives equality between the spectral bound and the growth bound:
$$\omega_0:=\inf\{\omega\in\mathbb R:\ \exists M>0,\ \forall t\geq 0,\ \|T_t|_Y\|\leq Me^{\omega t}\}= \max\{\Re e(s):\ s\in S_1\setminus\Sigma_x\}.$$
Fix $x'\in(\omega_0,x)$ and $M\ge1$ such that $\|T_t|_Y\|\le Me^{x't}$ for all
$t\ge0$. Since $\Re e(\mu)=x>x'$, \cite[Theorem II.1.10.(iii)]{EN06} applied to the
semigroup $(T_t|_Y)_{t\ge0}$ yields
\[
  \|R(\mu,\Delta_1|_Y)\|\le\frac{M}{x-x'},
\]
a bound independent of $\Im m(\mu)$.
Adding the two
estimates gives \eqref{eq:unifres}.

\medskip

We are now ready to conclude.
Let $\mu\notin\overline{S}$ and put
$\delta:=\operatorname{dist}(\mu,S)>0$. For every $\beta$ we have
$S_1+ic_\beta\subset S$, whence
\begin{equation*}
\operatorname{dist}(\mu-ic_\beta,S_1)=\operatorname{dist}(\mu,S_1+ic_\beta)\ge\delta,
\end{equation*}
and $\Re e(\mu-ic_\beta)=\Re e(\mu)$ for every $\beta$. In particular
$\mu-ic_\beta\in\rho(\Delta_1)$, and \eqref{eq:unifres} applied with
$x=\Re e(\mu)$ and this $\delta$ gives
$$\sup_\beta\|R(\mu,\Delta_1+ic_\beta)\|
   =\sup_\beta\|R(\mu-ic_\beta,\Delta_1)\|<\infty.$$
Lemma \ref{lem:directsum} then yields $\mu\in\rho(\Delta)$. Therefore
$\sigma(\Delta)=\overline{S}$.

It remains to identify $\overline{S}$. We first check that
$S_1+i\overline{\Theta_+}$ is closed: if $s_n+ic_n\to\zeta$ with $s_n\in S_1$
and $c_n\in\overline{\Theta_+}$, then $\Re e( s_n)\to\Re e(\zeta)$, so $\Re e(s_n)\ge\Re e(\zeta)-1$
for $n$ large and since $\Sigma_{\Re e(\zeta)-1}$ is finite, we may assume $s_n=s$ constant after
extracting; then $c_n\to(\zeta-s)/i\in\overline{\Theta_+}$. Since
$S_1+i\overline{\Theta_+}\subset\overline{S}$ by continuity, we get
$\overline{S}=S_1+i\overline{\Theta_+}$. Lemma \ref{lem:theta} now gives
$\overline{\Theta_+}=\mathbb{R}$ in case (a), which is assertion (a), and
$\overline{\Theta_+}=\Theta_+$ in case (b), which is assertion (b).
\end{proof}

\begin{remark}
The spectral bound of $\Delta$ is
$s(\Delta)=\max\{\Re e(\nu):\nu\in\sigma(\Delta)\}=0$, attained at
$\alpha=0$, and it is equal to the growth bound of $(C_{\varphi_t})_{t\ge0}$,
\[
  \omega_0:=\inf\{\omega\in\mathbb{R}:\ \exists M>0,\
   \|C_{\varphi_t}\|\le M e^{\omega t}\ \ \forall t\ge0\}=0,
\]
since $\omega_0=\frac1t\log r(C_{\varphi_t})=0$ by \cite[Proposition V.1.22]{EN06}.
Note that this equality holds here by direct computation and not by the
general result \cite[Corollary V.2.9]{EN06}, which does not apply to
$(C_{\varphi_t})_{t\ge0}$.
\end{remark}

\section{Reducibility}\label{irr}

As usual, denote by $\mathcal{L}(H)$ the algebra of all bounded linear operators on a Hilbert space $H$. Given any $T\in \mathcal{L}(H)$, we recall that the \emph{commutant} and \emph{bicommutant} of $T$ are defined as
\begin{equation*}
  \{T\}':=\left\{A\in\mathcal{L}(H):TA=AT\right\}, \quad \{T\}''=\bigcap_{A\in\{T\}'}\{A\}'
\end{equation*}
respectively. Also let $\alg (T):=\{p(T):p\ \mbox{is a polynomial}\}.$
It is easy to see that $\overline{\alg (T)}^{\sigma}$, the closure of $\alg (T)$ in the weak operator topology $\sigma$, is a commutative algebra such that
\begin{equation*}
  \overline{\alg (T)}^{\sigma}\subset\{T\}''\subset\{T\}'.
\end{equation*}
We say that an operator $T$ has a \emph{minimal commutant} if
$\overline{\alg (T)}^{\sigma}=\{T\}'.$
Likewise, $T$ has the \emph{double commutant property} if $\overline{\alg (T)}^{\sigma}=\{T\}''$. See \cite{LLSR18,LLSR19} for a complete characterization of the minimal commutant property and double commutant property for composition operators induced by linear fractional self-maps of the unit disk $\DD$ on the Hardy space $H^2(\DD)$ and \cite{BWY} for an extension to the context of several complex variables.

Recently, the existence of non-trivial idempotents lying in the commutant or the bicommutant of composition operators on $H^{2}(\DD)$ has been considered in \cite{Ja}.
As is well-known, such an idempotent is intricately linked with invariant subspaces and the decomposition of the space, see for example \cite{Co,Ha1967}.
$T\in \mathcal{L}(H)$ is called an \emph{irreducible operator} if the commutant $\{T\}'$ does not contain any non-trivial orthogonal projection, or equivalently, there is no non-trivial reducing subspace of $T$. Note that irreducibility is only unitary-invariant and not similarity-invariant.
Furthermore, $T\in \mathcal{L}(H)$ is called \emph{strongly irreducible} if $STS^{-1}$ is irreducible for all invertible operators $S$; otherwise $T$ is strongly reducible.
Then strongly irreducible operators are irreducible and strong irreducibility is invariant under similarity.
Following from \cite[Proposition 2.25]{JW}, $T\in \mathcal{L}(H)$ is strongly irreducible if and only if
there does not exist any non-trivial idempotent $P$ lying in $\{T\}'$,
or equivalently, there do not exist two non-trivial invariant subspaces $M$ and $N$ of $T$ such that $\mathcal{H}=M\oplus N$.

The aim of this section is primarily devoted to characterizing the existence of non-trivial idempotents (that is, $J\in\mathcal{L}(\fcd)$ such that $J^{2}=J$ and $J\neq0,Id_{\fcd}$) lying in the commutant or the bicommutant of composition operators.
For this, we first recall the functional commutant (or bicommutant) property on a complex Banach space $\mathcal{X}$, given in \cite{Ja}. Given a subset $G\subset\mathbb{C}$,
denote by $\mathcal{C}(G)$ the algebra consisting of all continuous functions $f:G\rightarrow\mathbb{C}$, equipped with the pointwise sum and multiplication of functions.
An operator $T\in\mathcal{L}(\mathcal{X})$ has the \emph{functional commutant property} (respectively, \emph{functional bicommutant property}) if there exist a connected subset $G\subset\mathbb{C}$, a subalgebra $\mathcal{A}\subset\mathcal{C}(G)$ and an algebra isomorphism $\Phi:\mathcal{A}\rightarrow\{T\}'$ (respectively, $\Phi:\mathcal{A}\rightarrow\{T\}''$).
We also recall the following relations between the functional commutant (or bicommutant) property and the existence of idempotents lying in such algebras in \cite{Ja}.

\begin{lemma}\label{nece-fc}
If $T\in\mathcal{L}(\mathcal{X})$ has the functional commutant property (respectively, functional bicommutant property),
then there exist no non-trivial idempotents lying in $\{T\}'$ (respectively, $\{T\}''$).
\end{lemma}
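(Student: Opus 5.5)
The plan is to argue by contradiction, using that the algebra structure of $\mathcal C(G)$ on a connected set $G$ admits no nontrivial idempotents. Assume $T$ has the functional commutant property. Then there are a connected $G\subset\CC$, a subalgebra $\mathcal A\subset\mathcal C(G)$ and an algebra isomorphism $\Phi:\mathcal A\to\{T\}'$. Let $J\in\{T\}'$ with $J^2=J$. Since $\Phi$ is bijective, there is a unique $f\in\mathcal A$ with $\Phi(f)=J$.

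The first step is to transfer the idempotent relation to $f$. Because $\Phi$ is multiplicative, $\Phi(f^2)=\Phi(f)^2=J^2=J=\Phi(f)$, and injectivity then gives $f^2=f$ pointwise on $G$. Hence $f(G)\subset\{0,1\}$. Since $f$ is continuous and $G$ is connected, $f(G)$ is a connected subset of $\{0,1\}$, so $f\equiv 0$ or $f\equiv 1$.

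The second step is to identify $J$. If $f\equiv 0$, then $f$ is the zero element of $\mathcal A$, and linearity of $\Phi$ gives $J=0$. If $f\equiv 1$, then $f$ is the constant function $1$ and lies in $\mathcal A$. It is the multiplicative unit of $\mathcal A$, so $\Phi(1)$ is a unit for $\Phi(\mathcal A)=\{T\}'$. Since $\{T\}'$ contains $Id_{\mathcal X}$, uniqueness of units gives $\Phi(1)=Id_{\mathcal X}$, hence $J=Id_{\mathcal X}$. In either case $J$ is trivial.

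For the bicommutant version the argument is identical, with $\{T\}''$ in place of $\{T\}'$; it uses only that $\{T\}''$ is a unital algebra containing $Id_{\mathcal X}$. The only point needing any care is the step $\Phi(1)=Id_{\mathcal X}$. Surjectivity of $\Phi$ is what makes it work: it guarantees that the image of the unit of $\mathcal A$ acts as a unit on all of $\{T\}'$, so $\Phi(1)$ coincides with the identity operator.
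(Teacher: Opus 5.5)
Your proof is correct. Pulling $J$ back to $f=\Phi^{-1}(J)$, injectivity and multiplicativity of $\Phi$ give $f^2=f$, and connectedness of $G$ forces $f\equiv 0$ or $f\equiv 1$; surjectivity of $\Phi$ together with $Id_{\mathcal X}\in\{T\}'$ (resp.\ $\{T\}''$) then yields $\Phi(1)=Id_{\mathcal X}$. The paper gives no proof of its own here (it only recalls the lemma from \cite{Ja}), and your argument is the standard one behind that citation.
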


Combining this with \cite[Proposition 2.18]{JW}, we obtain the following implication.

\begin{lemma}\label{FCP-spectra}
If $T\in\mathcal{L}(\mathcal{X})$ has the functional commutant property,
then $T$ is strongly irreducible.
\end{lemma}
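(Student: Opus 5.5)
The statement is a direct consequence of the two quoted facts, and I would prove it by contradiction using Lemma~\ref{nece-fc} together with the characterization of strong irreducibility recalled just before it (\cite[Proposition 2.25]{JW}). The plan has two steps.

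First, suppose $T\in\mathcal L(\mathcal X)$ has the functional commutant property. Fix a connected set $G\subset\CC$, a subalgebra $\mathcal A\subset\mathcal C(G)$ and an algebra isomorphism $\Phi:\mathcal A\to\{T\}'$. The core of Lemma~\ref{nece-fc}, which I take as given, runs as follows.
\begin{itemize}
\item Let $J\in\{T\}'$ be an idempotent and put $f=\Phi^{-1}(J)$.
\item Since $\Phi$ is a multiplicative bijection, $f^2=f$ pointwise, so $f$ takes only the values $0$ and $1$.
\item Since $f$ is continuous and $G$ is connected, $f\equiv0$ or $f\equiv1$.
\item The constant $1$ must map to the identity, because $\Phi$ is onto $\{T\}'\ni \mathrm{Id}$ and the unit of $\mathcal A$ corresponds to the unit of $\{T\}'$. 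Hence $J=0$ or $J=\mathrm{Id}$.
\end{itemize}
So $\{T\}'$ contains no non-trivial idempotent.

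Second, I invoke the equivalence recalled before Lemma~\ref{nece-fc}: $T$ is strongly irreducible if and only if $\{T\}'$ contains no non-trivial idempotent. On a Hilbert space this is exactly \cite[Proposition 2.25]{JW}; for a general Banach space $\mathcal X$ it is \cite[Proposition 2.18]{JW}, as the text indicates.

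If one wants to see this equivalence directly, consider any invertible $S$. A non-trivial orthogonal projection $Q$ commuting with $STS^{-1}$ yields the non-trivial idempotent $S^{-1}QS\in\{T\}'$. So the absence of idempotents in $\{T\}'$ rules out reducing subspaces for every $STS^{-1}$, which is the definition of strong irreducibility.

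The only delicate point is the unit issue in the first step: one must check that $\Phi$ sends the constant function $1$ to $\mathrm{Id}$ and, in particular, that $1\in\mathcal A$. If $\mathcal A$ were non-unital, $\Phi^{-1}(\mathrm{Id})$ would still be an idempotent continuous function, hence constant. This forces $1\in\mathcal A$, since $\Phi^{-1}(\mathrm{Id})=0$ is impossible by injectivity. The rest is formal.
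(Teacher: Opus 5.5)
Your proof is correct and follows the paper's route: the paper obtains the lemma in one line by combining Lemma~\ref{nece-fc} (no non-trivial idempotents in $\{T\}'$) with \cite[Proposition 2.18]{JW}, exactly as you do. Your direct check that a non-trivial orthogonal projection $Q\in\{STS^{-1}\}'$ would produce the non-trivial idempotent $S^{-1}QS\in\{T\}'$ makes the second step self-contained in the Hilbert space setting where strong irreducibility is defined, so you do not need to rely on guessing the exact content of the cited propositions.
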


As illustrated at the end of the work \cite{Ja} on $H^{2}(\mathbb{D})$, the concepts of minimal commutant or the double commutant property are not sufficient to assure the non-existence of non-trivial idempotents in the commutant or the bicommutant of an operator.
The same phenomenon occurs on the Fock space.
Precisely, we say that the complex numbers $\lambda_1,\dots,\lambda_s$ are \emph{independent} if, for any $\alpha,\beta\in\ZZ_+^s,$ $\lambda^\alpha=\lambda^\beta$ implies $\alpha=\beta.$
In particular, if $\lambda_i=\lambda_j$ for some $i\neq j$ or if $\lambda_i=0$ for some $i,$ then $\lambda_1,\dots,\lambda_s$ are not independent.
Then there exist composition operators having a minimal commutant, while they  fail to possess the functional commutant property, due to \cite[Theorem 5.2]{BWY} and the following result.

\begin{theorem}\label{spe-fc}
Let $\varphi(z)=Az+B\in \mathcal{S}_{c}(\mathbb{C}^{d})$ with $A\in \mathcal M_d(\mathbb C)$ and $B\in \mathbb{C}^{d}$.
If either $\sigma(A)\subset\mathbb{D}$ or $\sigma(A)\subset\mathbb{T}$ and the eigenvalues of $A$ are independent,
then there exist non-trivial idempotents lying in $\{C_{\varphi}\}''$.
As a consequence,
$C_{\varphi}$ possesses neither the functional  commutant nor the bicommutant property.
\end{theorem}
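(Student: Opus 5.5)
The plan is to produce an explicit non-trivial idempotent $J$ that commutes with every operator commuting with $C_\varphi$. The final assertion then follows from Lemma \ref{nece-fc}. Indeed, $\{C_\varphi\}''\subset\{C_\varphi\}'$, so an idempotent in the bicommutant also lies in the commutant, and this rules out both functional properties.

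\emph{Case $\sigma(A)\subset\DD$.} Then $\|A^n\|\to 0$. By Theorem \ref{thm:iteration}(b), with $p=d$, we have $\widetilde Q=0$, so $C_\varphi^n\to C_\psi$ in norm. Here $\psi\equiv R$ is constant and $R=(I_d-A)^{-1}B$ is the fixed point of $\varphi$. Set $J=C_\psi$, so that $Jf=f(R)\cdot 1$. This is a rank-one idempotent with $J\neq 0$ and $J\neq Id$. Since $J$ is a norm limit of powers of $C_\varphi$, it lies in $\overline{\alg(C_\varphi)}\subset\{C_\varphi\}''$: every operator commuting with $C_\varphi$ commutes with each $C_\varphi^n$, hence with the limit. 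This case is straightforward.

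\emph{Case $\sigma(A)\subset\TT$ with independent eigenvalues.} Here $p=0$. By Lemma \ref{simility} we may assume $A=U=\mathrm{diag}(\lambda_1,\dots,\lambda_d)$ and $B=0$; conjugation by the unitary $C_P$ preserves bicommutants and idempotents. The monomials $z^\alpha$ then form an orthogonal basis of eigenvectors: $C_\varphi z^\alpha=\lambda^\alpha z^\alpha$. Independence means that the eigenvalues $\lambda^\alpha$ are pairwise distinct, so each eigenspace $\ker(C_\varphi-\lambda^\alpha)$ is one-dimensional, spanned by $z^\alpha$. Take $J$ to be the orthogonal projection onto $\CC\cdot 1$ (or onto any $\CC z^\alpha$). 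To see that $J\in\{C_\varphi\}''$, let $X\in\{C_\varphi\}'$. Then $X$ preserves each eigenspace, so $Xz^\alpha=c_\alpha z^\alpha$.

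The main obstacle is the adjoint side. We must show that $X$ also maps the closed span of $\{z^\beta:\beta\neq 0\}$ into itself; equivalently, that $\langle Xz^\beta,1\rangle=0$, which already holds since $Xz^\beta=c_\beta z^\beta$. Since finite linear combinations of monomials are dense and $X$ is bounded, $X$ is diagonal in the basis $\{z^\alpha\}$. Hence $X$ commutes with every diagonal projection, in particular with $J$. Note that $C_\varphi$ is unitary here ($A$ unitary, $B=0$), so its normality makes this bookkeeping consistent. Therefore $J\in\{C_\varphi\}''$, and $J$ is non-trivial.

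In both cases Lemma \ref{nece-fc} then yields the failure of the functional bicommutant property and, via $\{C_\varphi\}''\subset\{C_\varphi\}'$, of the functional commutant property. The only delicate points are the reduction to the normal form, namely that $B=0$ when $p=0$ (this is part of Lemma \ref{simility}), and the density argument showing that every commuting operator is diagonal.
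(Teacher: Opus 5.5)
Your proof is correct, and you produce the same idempotent as the paper: $f\mapsto f(w)\mathbb{1}$, with $w=(I_d-A)^{-1}B$ in the contracting case, and $w=0$ (the orthogonal projection onto the constants) in the unitary case. You verify membership in $\{C_\varphi\}''$ by a different route.

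The paper uses one mechanism for both cases. It shows $\ker(C_\varphi-I)=\CC\mathbb{1}$ and $\ker(C_\varphi^*-I)=\CC k_w$, using Theorem \ref{thm:iteration} in the first case and the adjoint formula $C_{Sz}^*=C_{S^*z}$ in the second. Then any $X\in\{C_\varphi\}'$ satisfies $X\mathbb{1}=\lambda\mathbb{1}$ and $X^*k_w=\bar\lambda k_w$, because $\langle k_w,\mathbb{1}\rangle=1\neq 0$, and hence $XJ=JX=\lambda J$.

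In the contracting case you skip the adjoint computation entirely. Since $J$ is the norm limit of $C_\varphi^n$, it lies in the norm closure of $\alg(C_\varphi)$, which is stronger than $J\in\{C_\varphi\}''$ and needs only Theorem \ref{thm:iteration}(b).

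In the unitary case you identify the whole commutant as the operators diagonal in the monomial basis. This is essentially the argument the paper gives later, in the example following Theorem \ref{fc-ness}. It shows that, for the diagonalized operator, every orthogonal projection onto a closed span of monomials lies in the bicommutant, not just $J$.

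The paper's argument is more portable: it applies to any operator $T$ whose fixed spaces for $T$ and $T^*$ are one-dimensional and non-orthogonal. Yours is more elementary and extracts more information in each case.

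The paragraph you label the ``main obstacle'' is not really an obstacle. Once $Xz^\alpha=c_\alpha z^\alpha$ for every $\alpha$, boundedness and density already make $X$ diagonal, so no adjoint-side check is needed.
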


\begin{proof}
Let either $\sigma(A)\subset\mathbb{D}$, or $\sigma(A)\subset\mathbb{T}$ are independent. We first claim that $\ker(C_{\varphi}-I)=\mathbb{C}\mathbb{1}$ and  $\ker(C_{\varphi}^{\ast}-I)=\mathbb{C}k_w$ for some $w\in\CC^d.$ If $\sigma(A)\subset\mathbb{T}$, then $B=0$ and $C_{\varphi}$ is similar to $C_{\psi}$ with $\psi(z)=Sz$, where $S$ is the diagonal matrix containing all the eigenvalues of $A$, following from Lemma \ref{simility}.
At the same time, $C_{\psi}^{\ast}=C_{\widehat{\psi}}$ with $\widehat{\psi}(z)=S^{\ast}z$, due to \cite[Lemma 2]{CMS}. Then the desired result with $w=0$ can be obtained easily by the hypotheses that $\sigma(A)\subset\mathbb{T}$ are independent.

If $\sigma(A)\subset\mathbb{D}$, then the sequence $(C_{\varphi}^{n})$ converges in norm to the operator mapping a function $f$ to the constant function with value $f((I_{d}-PNP^{\ast})^{-1}B)$ as in Theorem \ref{thm:iteration}. Clearly, $\ker(C_{\varphi}-I)=\mathbb{C}\mathbb{1}$.
Pick now $g\in\fcd$ such that $C_{\varphi}^{\ast}g=g$. This implies that $C_{\varphi^{[n]}}^{\ast}g=g$ for each $n\in \mathbb{N}$.
Thus, for each $f\in \fcd$,
\begin{equation*}
  \langle g,f\rangle=\langle C_{\varphi^{[n]}}^{\ast}g,f\rangle=\langle g,C_{\varphi^{[n]}}f\rangle,
\end{equation*}
and meanwhile,
\begin{equation*}
\langle g,C_{\varphi^{[n]}}f\rangle\rightarrow \langle g,f((I_{d}-PNP^{\ast})^{-1}B)\mathbb{1}\rangle
=\overline{f((I_{d}-PNP^{\ast})^{-1}B)}g(0)
\end{equation*}
as $n\rightarrow\infty$.
Therefore, $g=g(0)k_w$ with $w=(I_{d}-PNP^{\ast})^{-1}B.$

Furthermore, considering the rank-one operator $J=\mathbb{1}\otimes k_w$, that acts as
\begin{equation*}
  (\mathbb{1}\otimes k_w)f=f(w)\mathbb{1}, \ \mbox{for any}\ f\in \fcd,
\end{equation*}
it is easy to see that $J$ is a non-trivial idempotent. Now we will show that $J$ lies in the bicommutant of $C_{\varphi}$.
Indeed, given any $A\in \{C_{\varphi}\}'$, $A^{\ast}\in \{C_{\varphi}^{\ast}\}'$. Then it is easy to see that $A\mathbb{1}\in \ker(C_{\varphi}-I)$ and $A^{\ast}k_w\in \ker(C_{\varphi}^{\ast}-I)$.
It follows that $A\mathbb{1}=\lambda\mathbb{1}$ and $A^{\ast}k_w=\mu k_w$ for some constant $\lambda,\mu\in \mathbb{C}$. Taking the scalar product,
$${\mu=\frac{\langle A^*k_w,\mathbb{1}\rangle}{\langle k_w, \mathbb{1}\rangle}=
\frac{\langle k_w,A\mathbb{1}\rangle}{\langle k_w,\mathbb{1}\rangle}=\bar \lambda.}$$
Finally, noting that
\begin{equation*}
  A J=(A\mathbb{1})\otimes k_w=\lambda J
\end{equation*}
and
\begin{equation*}
JA=\mathbb{1}\otimes(A^{\ast}k_w)=\lambda J,
\end{equation*}
then we have $J \in\{C_{\varphi}\}''$.
\end{proof}

Now we can illustrate that on $\fcd$, any bounded composition operator is strongly reducible, and thus any bounded composition operator fails to possess the functional commutant property.

\begin{theorem}\label{fc-ness}
Any bounded composition operator is strongly reducible on $\fcd$.
\end{theorem}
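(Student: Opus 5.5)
By \cite[Proposition 2.25]{JW}, it suffices to produce, for every $\varphi(z)=Az+B\in\sccd$, a non-trivial idempotent $J$ in $\{C_\varphi\}'$. Equivalently, we need two non-trivial closed invariant subspaces $M,N$ of $C_\varphi$ with $\fcd=M\oplus N$. Strong irreducibility is invariant under similarity, and Lemma \ref{simility} gives $C_\varphi=C_{P^*}C_{Sz+v}C_P$. We may therefore assume $A=S=T\oplus U$, with $T\in\mathcal M_p(\CC)$ upper triangular with spectrum in $\DD$, $U=\operatorname{diag}(u_{p+1},\dots,u_d)$ unitary, and $B=(B_0,0)$ with $B_0\in\CC^p$. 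We then split into the cases $p=d$, $p=0$, and $0<p<d$.

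\textbf{Case $p=d$} ($\sigma(A)\subset\DD$). This is covered by Theorem \ref{spe-fc}. The rank-one idempotent $J=\mathbb 1\otimes k_w$ constructed there lies in $\{C_\varphi\}''\subset\{C_\varphi\}'$, and it is non-trivial since $\dim\fcd=\infty$. Concretely, $\fcd=\CC\mathbb 1\oplus\{f:\ f(w)=0\}$ with both summands invariant.

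\textbf{Case $p=0$} ($A=U$ unitary diagonal, $B=0$). Here $C_\varphi$ acts diagonally on monomials, $C_\varphi z^\alpha=u^\alpha z^\alpha$, and the monomials form an orthogonal basis. Fix any non-trivial partition $\ZZ_+^d=\Lambda_1\sqcup\Lambda_2$, for instance $\Lambda_1=\{0\}$. The orthogonal projection onto $\overline{\vect}\{z^\alpha:\alpha\in\Lambda_1\}$ then commutes with $C_\varphi$. This gives a non-trivial (even orthogonal) idempotent in the commutant.

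\textbf{Mixed case $0<p<d$.} We use the orthogonal decomposition $\fcd=\bigoplus_{\beta\in\ZZ_+^{d-p}}\mathcal H_\beta$, with $\mathcal H_\beta=\{g(z')(z'')^\beta:\ g\in\mathcal F(\CC^p)\}$, as in the proof of Theorem \ref{thm:generator}. Because $\varphi(z)=(Tz'+B_0,Uz'')$, we have
\[
C_\varphi\bigl(g\otimes(z'')^\beta\bigr)=u^\beta\,(C_{\varphi_0}g)\otimes(z'')^\beta ,
\]
where $\varphi_0(z')=Tz'+B_0$. So each $\mathcal H_\beta$ reduces $C_\varphi$. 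Taking $M=\mathcal H_0$ and $N=\bigoplus_{\beta\neq0}\mathcal H_\beta$ gives a non-trivial orthogonal projection in $\{C_\varphi\}'$. Alternatively, one could use the idempotent $J_0\otimes I$ built from the case $p=d$ applied to $C_{\varphi_0}$ on $\mathcal F(\CC^p)$.

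The only step requiring care is justifying this tensor/Fubini decomposition. We need that $\mathcal H_\beta$ is closed and that $\fcd=\bigoplus_\beta\mathcal H_\beta$ is orthogonal. This follows by expanding $f(z',z'')=\sum_\beta g_\beta(z')(z'')^\beta$ and integrating in $z''$ first, exactly as already used in Theorem \ref{thm:generator}. No further obstacle is expected. Finally, Lemma \ref{FCP-spectra} then gives that no bounded $C_\varphi$ has the functional commutant property.
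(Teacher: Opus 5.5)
Your proof is correct, but it takes a different route from the paper.

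The paper argues spectrally. By \cite[Theorem 4.2]{JPZ}, $\sigma(C_\varphi)$ is the closure of $\{\lambda_1^{\alpha_1}\cdots\lambda_d^{\alpha_d}:\ \alpha\in\ZZ_+^d\}$. As soon as $A$ has an eigenvalue in $\DD$, only finitely many moduli $\geq r$ occur for each $r>0$, so this set is disconnected. Then \cite[Proposition 2.34]{JW} (strongly irreducible operators have connected spectrum, the obstruction being a Riesz idempotent) gives strong reducibility. The case $\sigma(A)\subset\TT$ is handled as in your case $p=0$: $C_\varphi$ is a diagonal unitary. So the paper treats your cases $p=d$ and $0<p<d$ in one stroke, at the cost of the full spectral computation of \cite{JPZ} and an abstract Riesz-projection argument.

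Your approach is self-contained and constructive:
\begin{itemize}
\item In the case $p=d$, you only need $\varphi(w)=w$ for $w=(I_d-A)^{-1}B$ to see that $\mathbb{1}\otimes k_w$ commutes with $C_\varphi$. The bicommutant part of Theorem \ref{spe-fc} is not needed.
\item Whenever $p<d$, the decomposition $\fcd=\bigoplus_\beta\mathcal H_\beta$ produces an orthogonal projection in the commutant. Since the reduction of Lemma \ref{simility} is a unitary equivalence, this shows that $C_\varphi$ is actually reducible (it has a non-trivial reducing subspace) whenever $A$ has an eigenvalue of modulus $1$. That is a stronger conclusion, which the spectral argument does not give.
\end{itemize}
Your cases $p=0$ and $0<p<d$ are really the same argument, since for $p=0$ one has $\mathcal H_\beta=\CC z^\beta$ and $\mathcal H_0=\CC\mathbb{1}$.
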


\begin{proof}
Write $\varphi(z)=Az+B\in \mathcal{S}_{c}(\mathbb{C}^{d})$ with $A\in \mathcal{M}_{d}(\mathbb{C})$ and $B\in \mathbb{C}^{d}$. We can assume $A\neq 0$ otherwise $C_\varphi f(z)=f(B)$ and the statement is trivial.
By \cite[Theorem 4.2]{JPZ}, the spectrum of $C_\varphi$ is equal to the closure of
$$\left\{\lambda_1^{\alpha_1}\cdots \lambda_d^{\alpha_d}:\ \alpha\in\ZZ_+^d\right\}$$
where $\lambda_1,\dots,\lambda_d$ are the eigenvalues of $A.$ If $\sigma(A)\cap\DD\neq\emptyset,$ it is easy to see that this last set is not connected, since for any $r>0,$
$$\left \{|\lambda_1^{\alpha_1}\cdots \lambda_d^{\alpha_d}|:\ \alpha\in\ZZ_+^d,\ |\lambda_1^{\alpha_1}\cdots \lambda_d^{\alpha_d}|\geq r\right\}$$
is finite. By  \cite[Proposition 2.34]{JW}, $C_\varphi$ is not strongly irreducible.

Finally, if $\sigma(A)\subset\TT,$ then $B=0$ and $C_\varphi$ is a unitary diagonal operator, which is far from being irreducible.
\end{proof}

Finally, following from \cite[Lemma 2.2]{Ja}, if $T\in\mathcal{L}(\mathcal{X})$ has the functional commutant property, then $\{T\}'$ is abelian and $\{T\}'=\{T\}''$. But its converse does not hold generally by the following example.

\begin{example}
Let $A$ be a normal matrix in $\mathcal{M}_{d}(\mathbb{C})$ with $\|A\|\leq 1$ and $\sigma(A)=\{\lambda_{1},\cdots,\lambda_{d}\}$ with independent numbers $\lambda_1,\dots,\lambda_d$. Then $\{C_{A}\}'$ is abelian and $\{C_{A}\}'=\{C_{A}\}''$, but $C_{A}$ fails to have the functional commutant property.
\end{example}

\begin{proof}
By Lemma \ref{FCP-spectra} and Theorem \ref{fc-ness}, it suffices to show that $\{C_{A}\}'$ is abelian and $\{C_{A}\}'=\{C_{A}\}''$. For this, note that $C_{A}$ is similar to the diagonal operator $\Lambda:$
$z^{\alpha}\mapsto\lambda^{\alpha}z^{\alpha}$ for all $\alpha\in \mathbb{Z}_+^{d}$.
Since $\lambda_1,\dots,\lambda_d$ are independent, any $T\in \{\Lambda\}'$ also is a diagonal operator, i.e., there exists a sequence of complex numbers $t_{\alpha}$ such that $|t_{\alpha}|\leq\|T\|$ and
$T z^{\alpha}=t_{\alpha}z^{\alpha}$ for each $\alpha\in \NN^{d}$.
Indeed, one can see that $\langle T z^{\alpha}, z^{\beta}\rangle=0$ for any $d$-tuple indexes $\alpha\neq\beta$.
So $\{\Lambda\}'$ is abelian and $\{\Lambda\}'=\{\Lambda\}''$,
which in turn implies that $\{C_{A}\}'$ is abelian and $\{C_{A}\}'=\{C_{A}\}''$.
\end{proof}

\section{Mean ergodic composition operators}

In this section, we apply the results of Section \ref{sec:iteration} to characterize mean and uniformly mean ergodic composition operators on $\mathcal F(\CC^d)$.
We recall that $T\in\mathcal{L}(\mathcal{X})$ is said to be \emph{mean ergodic} if there exists $P\in\mathcal{L}(\mathcal{X})$ on a Banach space $\mathcal{X}$ such that the $n$-th ergodic mean of $T$
\begin{equation*}
  M_{n}(T):=\frac{1}{n}\sum_{k=1}^{n}T^{k}
\end{equation*}
converges to $P$ in the strong operator topology,
and \emph{uniformly mean ergodic} if $M_{n}(T)\rightarrow P$ in the uniform operator topology.

The first step to characterize mean ergodic composition operators on $\mathcal F(\CC^d)$ is to prove that they are always power bounded. Recall that an operator $T$ on a Banach space is \emph{power-bounded} if $\sup_{n\geq1}\|T^{n}\|<\infty$.

\begin{theorem}\label{powerbounded}
Any bounded composition operator on $\fcd$ is power-bounded.
\end{theorem}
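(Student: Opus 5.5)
We reduce to the normal form of Lemma \ref{simility}. Since $C_\varphi=C_{P^*}\circ C_{Sz+v}\circ C_P$ with $C_P, C_{P^*}$ unitary, we have $\|C_\varphi^n\|=\|C_{Sz+v}^n\|$. So we may assume $A=\begin{pmatrix}T&0\\0&U\end{pmatrix}$ with $T$ upper triangular, $\sigma(T)\subset\DD$, $U$ diagonal unitary, and $B=(B_0,0)$ with $B_0\in\CC^p$. The $n$-th iterate is then
$$\varphi^{[n]}(z)=\bigl(T^nz'+B_n,\ U^nz''\bigr),\qquad B_n=(I_p-T)^{-1}(I_p-T^n)B_0.$$

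We first split off the unitary block, exactly as in the proof of Theorem \ref{thm:iteration}. Fubini in the variable $z''$ and the rotation invariance of the Gaussian weight under the unitary $U^n$ give
$$\|C_\varphi^n f\|^2=\frac1{(2\pi)^{d-p}}\int_{\CC^{d-p}}\|C^n_{\varphi_0}(g_{w})\|^2_{\mathcal F(\CC^p)}e^{-|w|^2/2}dA(w),$$
where $\varphi_0(z')=Tz'+B_0$ and $g_w=f(\cdot,w)$. To get this, we change variables $w\mapsto U^{-n}w$ in the outer integral and use $\|f\|^2=(2\pi)^{-(d-p)}\int\|g_w\|^2e^{-|w|^2/2}dA(w)$. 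Hence $\|C_\varphi^n\|\le\|C_{\varphi_0}^n\|$, and it suffices to prove power-boundedness on $\mathcal F(\CC^p)$ for $\varphi_0$, where $\sigma(T)\subset\DD$.

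On $\mathcal F(\CC^p)$ we use the norm formula \eqref{norm}:
$$\|C_{\varphi_0}^n\|=\exp\bigl((|z_n|^2-|T^nz_n|^2+|B_n|^2)/4\bigr),$$
where $z_n$ is any solution of $(I_p-(T^n)^*T^n)z_n=(T^n)^*B_n$.

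\begin{itemize}
\item Since $\sigma(T)\subset\DD$, we have $T^n\to0$, so $\|T^n\|\le 1/2$ for $n\ge n_0$. Then $I_p-(T^n)^*T^n$ is invertible with inverse of norm at most $4/3$, giving $|z_n|\le \tfrac43\|T^n\||B_n|$.
\item The vectors $B_n$ are bounded, since $(I_p-T)^{-1}$ exists and $\|T^n\|\le 1$.
\item Hence the exponents are uniformly bounded for $n\ge n_0$.
\item The finitely many $n<n_0$ contribute finitely many finite norms.
\end{itemize}

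This gives $\sup_n\|C_\varphi^n\|<\infty$. Alternatively, one can invoke Theorem \ref{thm:iteration}. When $\sigma(A)\cap(\TT\setminus\{1\})=\varnothing$, norm convergence of $(C_\varphi^n)$ already implies boundedness. The general case reduces to this one, because the factor $U^n$ acts by a unitary operator commuting with the rest of the decomposition.

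The only delicate point is the reduction step: checking that the unitary block $U$ (possibly with eigenvalues $\neq1$) can be absorbed isometrically in the Fubini decomposition. This needs the block-diagonal form and $B\in\CC^p\times\{0\}^{d-p}$, both provided by Lemma \ref{simility}. Once that is done, the estimate is just \eqref{norm} with $T^n\to 0$.
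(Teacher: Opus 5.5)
Your argument is correct and is essentially the paper's proof: you reduce via Lemma \ref{simility}, apply \eqref{norm} to the $n$-th iterate, use $\|T^n\|\le 1/2$ for large $n$ to control $z_n$, and bound $B_n$ through $(I_p-T)^{-1}$. The only difference is that you remove the unitary block with the Fubini argument from Theorem \ref{thm:iteration} (giving $\|C_\varphi^n\|\le\|C_{\varphi_0}^n\|$), whereas the paper applies \eqref{norm} directly on $\fcd$ and notes that $(S^n)^*S^n=\mathrm{diag}((T^n)^*T^n,I_{d-p})$ and $v^{(n)}\in\CC^p\times\{0\}^{d-p}$ make the $U$-block drop out of the formula.
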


\begin{proof}
Let $\varphi(z)=Az+B\in \mathcal{S}_{c}(\mathbb{C}^{d})$. By Lemma \ref{simility}, we have $C_\varphi=C_{P^*}\circ C_{\phi}\circ C_{P}$ with $\phi(z)=Sz+v$ and use the same notation therein. So $\|C_{\varphi}^{n}\|=\|C_{\phi}^{n}\|$ for any non-negative integer $n$, and then it suffices to show that $C_{\phi}$ is power-bounded. For this, note that its $n$-th iteration is
\begin{equation*}
\phi^{[n]}(z)=S^{n}z+v^{(n)},\ \mbox{with}\ v^{(n)}:=\sum_{k=0}^{n-1}S^{k}v\in \mathbb{C}^{p}\times\{0\}^{d-p}.
\end{equation*}
Now it follows from (\ref{norm}) that
\begin{equation*}
  \|C_{\phi}^{n}\|=\|C_{\phi^{[n]}}\|=\exp\left(\frac{|z_0^{(n)}|^2 -|S^{n}z_0^{(n)}|^2+|v^{(n)}|^2}{4}\right),
\end{equation*}
where $z_{0}^{(n)}$ is any solution of $(I_{d}-(S^{n})^{\ast}S^{n})z=(S^{n})^{\ast}v^{(n)}$.

For simplicity, write $z=(z',z'')$ with $z'\in \mathbb{C}^{p}$ and
$z''\in\mathbb{C}^{d-p}$. Note that $|v^{(n)}|=|v^{(n)'}|$,
\begin{equation*}
  (S^{n})^{\ast}S^{n}=\begin{pmatrix} (T^{n})^{\ast}T^{n} & 0\\
  0 & (U^{n})^{\ast}U^{n} \end{pmatrix}=
  \begin{pmatrix} (T^{n})^{\ast}T^{n} & 0\\
  0 & I_{d-p} \end{pmatrix}.
\end{equation*}
Thus $(I_{d}-(S^{n})^{\ast}S^{n})z_{0}^{(n)}=(S^{n})^{\ast}v^{(n)}$ is equivalent to
$(I_{p}-(T^{n})^{\ast}T^{n})z_{0}^{(n)'}=(T^{n})^{\ast}v^{(n)'}$, and
\begin{equation*}
  \big|z_0^{(n)}\big|^2 -\big|S^{n}z_0^{(n)}\big|^2
  =  \big\langle (I_{d}-(S^{n})^{\ast}S^{n})z_{0}^{(n)}, z_{0}^{(n)}\big\rangle
  = \big\langle (I_{p}-(T^{n})^{\ast}T^{n})z_{0}^{(n)'}, z_{0}^{(n)'}\big\rangle.
\end{equation*}
Since the spectral radius of $T$ satisfies that $r(T)<1$ by Lemma \ref{simility}, then there is some positive integer $N$ such that $\|T^{n}\|<\frac{1}{2}$ for any $n>N$. In the sequel, let us restrict $n>N$, and get
\begin{equation*}
  \|(T^{n})^{\ast}T^{n}\|\leq\frac{1}{4},
\end{equation*}
which in turn implies that $I_{p}-(T^{n})^{\ast}T^{n}$ is invertible. Also we have
\begin{equation*}
\|(I_{p}-(T^{n})^{\ast}T^{n})^{-1}\|\leq\frac{1}{1-\|(T^{n})^{\ast}T^{n}\|}.
\end{equation*}
Therefore,
\begin{align*}
   \big|z_0^{(n)}\big|^2 -\big|S^{n}z_0^{(n)}\big|^2 = & \big\langle (T^{n})^{\ast}v^{(n)'},(I_{p}-(T^{n})^{\ast}T^{n})^{-1} (T^{n})^{\ast}v^{(n)'}\big\rangle  \\
   \leq & \big\|(I_{p}-(T^{n})^{\ast}T^{n})^{-1}\big\|\cdot \big|(T^{n})^{\ast}v^{(n)'}\big|^{2}\\
   \leq & \frac{\|T^{n}\|^{2}}{1-\|(T^{n})^{\ast}T^{n}\|} \big|v^{(n)'}\big|^{2},
\end{align*}
and then
\begin{equation*}
\|C_{\phi}^{n}\|\leq\exp\left(\frac{1}{4}\left(1+\frac{\|T^{n}\|^{2}}{1-\|(T^{n})^{\ast}T^{n}\|}\right)\big|v^{(n)'}\big|^{2}\right)
\leq \exp\left(\frac{\big|v^{(n)'}\big|^{2}}{3}\right).
\end{equation*}

Furthermore, $I_{p}-T$ is invertible and $\lim_{n\rightarrow\infty}T^{n}=0$, again by $r(T)<1$. Then
\begin{equation*}
\big|v^{(n)'}\big|\leq \left\|\sum_{k=0}^{n-1}T^{k}\right\||v'|\leq \|(I_{p}-T)^{-1}\|\|I_{p}-T^{n}\||v'|,
\end{equation*}
and thus
\begin{equation*}
\|C_{\phi}^{n}\|\leq\exp\left(\frac{\left(\|(I_{p}-T)^{-1}\|\|I_{p}-T^{n}\|\right)^{2}|v'|^{2}}{3}\right),
\end{equation*}
where the right hand side converges to $\exp\left(\frac{\|(I_{p}-T)^{-1}\|^{2}|v'|^{2}}{3}\right)$ as $n\rightarrow\infty$.
So $C_{\phi}$, and then $C_{\varphi}$, is power-bounded.
\end{proof}

It is well-known that every periodic operator is uniformly mean ergodic. But we need the following generalization, which is probably known, and we give a brief proof for completeness.

\begin{lemma}\label{lem:near-perodic}
Let $\mathcal{X}$ be a Banach space and $T\in\mathcal{L}(\mathcal{X})$. If there are some $m\in\NN$ and $P\in\mathcal{L}(\mathcal{X})$ such that $T^{mn}$ converges to $P$ in $\mathcal{X}$-norm as $n\to\infty$, then $T$ is uniformly mean ergodic. More precisely,
\begin{equation*}
\lim_{n\to\infty}\left\|M_{n}(T)-M_{m}(T)P\right\|=0.
\end{equation*}
\end{lemma}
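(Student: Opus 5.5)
The plan is to write an arbitrary index $n$ as $n=qm+r$ with $q=\lfloor n/m\rfloor$ and $0\le r<m$, and to compare $M_n(T)$ with $M_m(T)P$ block by block. The key algebraic observation is that $P$ commutes with $T$: indeed $T^{mn}T=TT^{mn}$ for all $n$, and passing to the norm limit gives $PT=TP$. Moreover $(T^{mn})_n$ converges in norm, hence is bounded, say $\sup_{n}\|T^{mn}\|\le K$. Combined with $\|T^j\|\le \max_{0\le r<m}\|T^r\|$ for $j<m$, this shows that $T$ is power-bounded, with $L:=\sup_{k\ge 0}\|T^k\|<\infty$.

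Next, I group the sum defining $M_n(T)$ into $q$ complete blocks of length $m$ plus a remainder:
\begin{equation*}
\sum_{k=1}^{n}T^k=\sum_{j=0}^{q-1}T^{jm}\sum_{i=1}^{m}T^{i}+\sum_{k=qm+1}^{qm+r}T^k
= m\Big(\sum_{j=0}^{q-1}T^{jm}\Big)M_m(T)+R_n,
\end{equation*}
where $\|R_n\|\le rL\le mL$. Dividing by $n$, the remainder term $R_n/n$ tends to $0$ in norm. For the main term, since $T^{jm}\to P$ in norm, Cesàro's lemma applied in the Banach space $\mathcal L(\mathcal X)$ gives $\frac1q\sum_{j=0}^{q-1}T^{jm}\to P$ in norm. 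Using $qm/n\to 1$, I then obtain
\begin{equation*}
\frac{m}{n}\Big(\sum_{j=0}^{q-1}T^{jm}\Big)M_m(T)=\frac{qm}{n}\Big(\frac1q\sum_{j=0}^{q-1}T^{jm}\Big)M_m(T)\longrightarrow PM_m(T)=M_m(T)P,
\end{equation*}
where the last equality is exactly where the commutation $PT=TP$ is needed.

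Altogether this yields $\|M_n(T)-M_m(T)P\|\to 0$, which is the claimed uniform mean ergodicity. There is no serious obstacle in this argument; the only points requiring care are the uniform bound on the remainder, which is why I establish power-boundedness of $T$ at the start, and the commutation of $P$ with $T$, which is what allows the limit to be written as $M_m(T)P$ rather than $PM_m(T)$.
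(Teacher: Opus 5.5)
Your proof is correct and follows the paper's argument: you write $n=qm+r$, use Ces\`aro convergence of $\frac1q\sum_{j<q}T^{jm}$ to $P$, and bound the remainder by a constant times $\sup_\ell\|T^{m\ell}\|\sum_{k\le m}\|T^k\|$, divided by $n$. The only cosmetic difference is that the paper factors each block with the powers $T^k$ on the left, so the limit appears directly as $M_m(T)P$ and your (valid) commutation step $PT=TP$ is not needed.
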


\begin{proof}
Since $T^{mn}$ converges to $P$ in $\mathcal{X}$-norm as $n\to\infty$, then it follows immediately that the Ces\`{a}ro means of the
powers of $T^{m}$ also converge in operator norm to $P$; that is,
\begin{equation*}
\frac{1}{\ell}\sum_{q=0}^{\ell-1}T^{mq} \stackrel{\|\cdot\|}{\longrightarrow}P \text{ as } \ell\rightarrow\infty.
\end{equation*}

Now for any $n\in \NN$, write $n=m\ell+j$ with $j=0,1,\cdots,m-1.$
Then
\begin{align*}
M_{n}(T)&=
\frac{1}{m\ell+j}\sum_{k=1}^{m}\sum_{q=0}^{\ell-1}T^{mq+k}+
\frac{1}{m\ell+j}\sum_{k=1}^{j}T^{m\ell+k} \\
&=\frac{\ell}{m\ell+j}\sum_{k=1}^{m}T^k\left(\frac{1}{\ell}\sum_{q=0}^{\ell-1}T^{mq}\right)
+\frac{1}{m\ell+j}\sum_{k=1}^{j}T^{k}T^{m\ell}.
\end{align*}
Also noting that $\lim_{\ell\rightarrow\infty}\frac{\ell}{m\ell+j}=\frac{1}{m}$, then the first term satisfies that
\begin{equation*}
\frac{\ell}{m\ell+j}\sum_{k=1}^{m}T^k\left(\frac{1}{\ell}\sum_{q=0}^{\ell-1}T^{mq}\right)
\stackrel{\|\cdot\|}{\longrightarrow} \frac{1}{m}\sum_{k=1}^{m}T^kP,
\end{equation*}
as $\ell\rightarrow\infty$. On the other hand, since the sequence $(T^{m\ell})_{\ell\in\NN}$ is bounded, then the second term can be controlled by
\begin{equation*}
\left\|\frac{1}{m\ell+j}\sum_{k=1}^{j}T^{k}T^{m\ell}\right\|
\leq \frac{C}{m\ell+j} \sum_{k=1}^{m}\|T^k\| \rightarrow 0,\mbox{ as } \ell\rightarrow\infty,
\end{equation*}
where $C:=\sup_{\ell\geq 0}\|T^{m\ell}\|<\infty.$
Therefore, $M_{n}(T)\stackrel{\|\cdot\|}{\longrightarrow} M_{m}(T)P$,
as we wanted.
\end{proof}

Now the mean ergodicity of composition operators on $\fcd$ can be characterized.

\begin{theorem}\label{thm:mean-ergodic}
Any bounded composition operator is mean ergodic on $\fcd$. Furthermore, given any $\varphi(z)=Az+B\in\sccd$, $C_{\varphi}$ is uniformly mean ergodic if and only if either  $\sigma(A)\cap(\TT\backslash\{1\})=\varnothing,$ or all the elements in $\sigma(A)\cap(\TT\backslash\{1\})$ are roots of unity.
\end{theorem}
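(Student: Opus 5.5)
Throughout we reduce, via Lemma \ref{simility} and \eqref{eq:unitary}, to $\varphi(z)=Sz+v$ with $S=\mathrm{diag}(T,U)$, $U=\mathrm{diag}(\mu_{p+1},\dots,\mu_d)$ unitary diagonal and $v\in\CC^p\times\{0\}^{d-p}$. Mean ergodicity and uniform mean ergodicity are preserved under this unitary similarity.

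\textbf{Mean ergodicity.} By Theorem \ref{powerbounded}, $C_\varphi$ is power-bounded. On a Hilbert space (reflexive), every power-bounded operator is mean ergodic; this is the Lorch/Yosida mean ergodic theorem. This settles the first assertion.

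\textbf{Sufficiency for uniform mean ergodicity.} If $\sigma(A)\cap(\TT\setminus\{1\})=\varnothing$, Theorem \ref{thm:iteration}(b) gives norm convergence of $C_\varphi^n$. Lemma \ref{lem:near-perodic} with $m=1$ then gives uniform mean ergodicity. Now suppose all unimodular eigenvalues are roots of unity, and let $m$ be a common order, so that $U^m=I_{d-p}$. Then $\varphi^{[m]}(z)=S^mz+v^{(m)}$ belongs to $\sccd$ and its linear part has unimodular eigenvalues all equal to $1$. Theorem \ref{thm:iteration}(b) applied to $\varphi^{[m]}$ shows that $C_\varphi^{mn}=C_{\varphi^{[m]}}^n$ converges in norm. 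Lemma \ref{lem:near-perodic} then concludes.

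\textbf{Necessity.} Assume some $\lambda=\mu_d\in\TT\setminus\{1\}$ is not a root of unity. As in the proof of Theorem \ref{thm:iteration}(a), $C_\varphi(z_d^k)=\lambda^kz_d^k$ for every $k\ge0$, because the $d$-th coordinate of $v$ vanishes. If $M_n(C_\varphi)\to P$ in norm, then $P$ is a projection onto $\ker(I-C_\varphi)$ and $I-C_\varphi$ has closed range. Standard uniform ergodic theory (Lin's theorem, valid here since power-boundedness gives $\|T^n\|/n\to0$) says this is equivalent to $1$ being an isolated point of $\sigma(C_\varphi)$ or $1\notin\sigma(C_\varphi)$ (pole of the resolvent). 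But $\lambda^k\in\sigma_p(C_\varphi)$ for all $k$. Since $\lambda$ is not a root of unity, $\{\lambda^k\}$ is dense in $\TT$ and accumulates at $1$ without equalling $1$ for $k\ge1$, while $1\in\sigma(C_\varphi)$ via the constants. Hence $1$ is a non-isolated point of the spectrum, contradicting Lin's theorem.

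A more hands-on alternative to Lin's theorem: uniform convergence would force $\sup_{f\in\ker(I-C_\varphi)^{\perp}\text{-type}}$ estimates, but directly $\|M_n(C_\varphi)|_{\mathcal M}\|$, with $\mathcal M=\overline{\vect}\{z_d^k\}$ (an invariant subspace on which $C_\varphi$ acts diagonally), equals $\sup_{k\ge1}|\frac1n\sum_{j=1}^n\lambda^{kj}|$. This does not tend to $0$: choosing $k$ with $\lambda^k$ close to $1$ makes it near $1$. Meanwhile the limit must vanish on $z_d^k$ for $k\ge1$, since $\lambda^k\neq1$. This elementary version is the one I would write.

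The main obstacle is this necessity step. The care needed is in verifying that the limit operator is zero on each $z_d^k$ and that the norm of the restriction to the closed span is exactly the supremum over the orthogonal eigenvectors. This holds since the $z_d^k$ are mutually orthogonal in $\fcd$.
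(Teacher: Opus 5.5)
Your proof is correct. The mean-ergodicity and sufficiency parts match the paper's argument: power-boundedness plus reflexivity for the first, and Theorem \ref{thm:iteration}(b) applied to $\varphi$ or to $\varphi^{[m]}$, followed by Lemma \ref{lem:near-perodic}, for the second.

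For necessity the paper cites \cite[Theorem 4.2]{JPZ} to get that $1$ is a non-isolated point of $\sigma(C_\varphi)$, and then invokes the uniform ergodic theorem of \cite{Kr}. Your first version follows the same logic but proves the accumulation at $1$ directly from the eigenvectors $z_d^k$. Your second, elementary version avoids both citations. Since the limit $P$ must annihilate each $z_d^k$ with $k\ge1$, and $M_n(C_\varphi)$ is diagonal on the orthogonal family $(z_d^k)_{k\ge1}$, you get
$\|M_n(C_\varphi)-P\|\ge\sup_{k\ge1}\bigl|\frac1n\sum_{j=1}^n\lambda^{kj}\bigr|$.
This is at least $\cos 1$ once, for each $n$, you choose $k\ge1$ with $|\arg\lambda^k|\le 1/n$. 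Such $k$ exists because $\{\lambda^k\}_{k\ge1}$ is dense in $\TT$.

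Your route is self-contained and even quantitative, and it needs the normal form of Lemma \ref{simility} only to see that $C_\varphi z_d^k=\lambda^k z_d^k$. The paper's route is shorter but rests on two external results.

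Two small bookkeeping fixes are needed.
\begin{itemize}
\item The subspace should be $\overline{\vect}\{z_d^k:\ k\ge1\}$, or else you should estimate $(M_n(C_\varphi)-P)|_{\mathcal M}$. On $\mathcal M=\overline{\vect}\{z_d^k:\ k\ge0\}$ itself, the constant $z_d^0=\mathbb 1$ gives $\|M_n(C_\varphi)|_{\mathcal M}\|=1$, which proves nothing.
\item The choice of $k$ making $\lambda^k$ close to $1$ must depend on $n$, as indicated above.
\end{itemize}
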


\begin{proof}
The first statement follows immediately from Theorem \ref{powerbounded} and the fact that every power bounded operator on a reflexive Banach space is mean ergodic (see for example \cite[\S 2.1, Theorem 1.2]{Kr}).

If $\sigma(A)\cap(\TT\backslash\{1\})=\varnothing,$ it is easy to see that $C_\varphi$ is uniformly mean ergodic with the limit $C_\psi$ by Theorem \ref{thm:iteration} (b).
If all the elements in $\sigma(A)\cap(\TT\backslash\{1\})$ are roots of unity, then there is a positive integer $m$ such that $\sigma(A^{m})\cap(\TT\backslash\{1\})=\varnothing$. So $C_{\varphi}^{m}=C_{\varphi^{[m]}}$ satisfies the condition (b) of Theorem \ref{thm:iteration}, and then $C_{\varphi}^{mn}$ converges to $C_{\phi}$ in norm for some $\phi\in \mathcal{S}_{c}(\CC^{d})$, as $n\rightarrow\infty$. Thus in this case $C_{\varphi}$ is uniformly mean ergodic by Lemma \ref{lem:near-perodic}.

Conversely, if there is some $\lambda\in\sigma(A)\cap(\TT\backslash\{1\})$ and $\lambda$ is not a root of the unit, then by \cite[Theorem 4.2]{JPZ}, $1\in\sigma(C_\varphi)$ and $1$ is an accumulation point of $\sigma(C_\varphi)$. Thus by \cite[\S 2.2, Theorem 2.7]{Kr}, $C_\varphi$ is not uniformly mean ergodic.
\end{proof}

Consequently, there exist some bounded composition operators being mean ergodic but not uniformly mean ergodic, similar to the phenomenon on the spaces of single-variable functions \cite{BGJJ, SMB}.

\medskip

\noindent \textbf{AI disclosure.}
We use AI tools at the very last step to help improve the language. All research ideas, strategies, and
mathematical arguments were developed independently by the authors.

\smallskip

\noindent \textbf{Acknowledgment.}
M. Wang and X. Yao were supported by National Science Foundation of China
(No. 12571145). The authors would like to thank Professor Chunlan Jiang for helpful comments about the reducibility.

\end{document}